\theoremstyle{plain}
\newtheorem{theorem}{Theorem}
\newtheorem{proposition}{Proposition} 
\newtheorem{definition}{Definition}
\newtheorem{corollary}{Corollary}
\theoremstyle{definition}
\newtheorem{example}{Example}
\newtheorem{remark}{Remark}
\newcommand{\rmd}{\mathrm{d}}
\newcommand{\bbE}{\mathbb{E}}
\newcommand{\bbR}{\mathbb{R}}
\begin{document}

\title{Stone's theorem for distributional regression in Wasserstein distance}

\author[1]{Cl\'ement Dombry}
\author[2]{Thibault Modeste}
\author[1]{Romain Pic}
 
\affil[1]{Université Franche-Comté, CNRS UMR 6623, Laboratoire de Mathématiques de Besançon, F-25000 Besançon, France}
\affil[2]{Université Claude Bernard Lyon 1, CNRS UMR 5208, Institut Camille Jordan, F-69622 Villeurbanne, France}

\date{}

\maketitle

\begin{abstract}
We extend the celebrated Stone's theorem to the framework of distributional regression. More precisely, we prove that weighted empirical distribution with local probability weights satisfying the conditions of Stone's theorem provide universally consistent estimates of the conditional distributions, where the error is measured by the Wasserstein distance of order $p\geq 1$. Furthermore, for $p=1$,  we determine the minimax rates of convergence on specific classes of distributions. We finally provide some applications of these results, including  the estimation of conditional tail expectation or probability weighted moment. 
\end{abstract}

\noindent
\textbf{Keywords:} distributional regression, Wasserstein distance, non parametric regression, minimax rate of convergence.\\
\textbf{MSC 2020 subject classification: 62G05.} 

\pagebreak
\tableofcontents

\section{Introduction}
Forecast is a major task from statistics and often of crucial importance for decision making. In the simple case when the quantity of interest is univariate and quantitative, point forecast often takes the form of regression where one aims at estimating the conditional mean (or the conditional quantile) of the response variable $Y$  given the available information encoded in a vector of covariates $X$. A point forecast is only a rough summary statistic and should at least be accompanied with an assessment of uncertainty (e.g. standard deviation or confidence interval). Alternatively, probabilistic forecasting and distributional regression \citep{gneitingkatz} suggest to estimate the full conditional distribution of $Y$ given $X$, called the predictive distribution. 

In the last decades, weather forecast has been a major motivation for the development of probabilistic forecast. Ensemble forecasts are based on a given number of deterministic models whose parameters vary slightly in order to take into account observation errors and incomplete physical representation of the atmosphere. This leads to an ensemble of different forecasts that overall also assess the uncertainty of the forecast. Ensemble forecasts suffer from bias and underdispersion \citep{hamill_1997} and need to be statistically postprocessed in order to be improved. Different postprocessing methods have been proposed, such as Ensemble Model Output Statistics \citep{gneiting_raftery_2005},  Quantile Regression Forests \citep{taillardat_2019} or Neural Networks \citep{schulz_2021} among others. Distributional regression is now widely used beyond meteorology and recent methodological works include deep distribution regression by \cite{li_2021}, distributional random forest by \cite{Cevid_et_al_2021} or isotonic distributional regression by \cite{Henzi_et_al_2021}.

The purpose of the present paper is to provide an extension to the framework of distributional regression of the celebrated Stone's theorem \citep{Stone_1977} that states the consistency of local weight algorithm for the estimation of the regression function. The strength of Stone's theorem is that it is fully non-parametric and model-free, with very mild assumptions  that  covers many important cases such as kernel algorithms and nearest neighbor methods, see e.g. \cite{gyorfi} for more details. We prove that Stone's theorem has a natural and elegant extension to distributional regression with error measured by the Wasserstein distance of order $p\geq 1$. Our result covers not only the case of a one-dimensional output $Y\in\bbR$ where the Wasserstein distance has a simple explicit form, but also the case of a multivariate output $Y\in\bbR^d$. The use of the Wasserstein distance is motivated by recent works revealing that it is a useful and powerful tool in statistics, see e.g. the review by \cite{Panaretos_Zemel_2020}.
Besides this main result, we characterize, in the case $d=1$ and $p=1$, the optimal minimax rate of convergence on suitable classes of distributions. We also discuss implications of our results to estimate various statistics of possible interest such as the expected shortfall or the probability weighted moment. 

The structure of the paper is the following. In Section~\ref{sec:background}, we present the required background on Stone's theorem and Wasserstein spaces. Section~\ref{sec:main} gathers our main results, including the extension of Stone's theorem to distributional regression (Theorem~\ref{thm:main}), the characterization of optimal minimax rates of convergence (Theorem~\ref{thm:minimax}) and some applications (Proposition~\ref{prop:appli} and the subsequent examples). All the technical proofs are postponed to Section~\ref{sec:proofs}.

\section{Background}\label{sec:background}
\subsection{Stone's theorem}
In a regression framework, we observe  a sample $(X_i,Y_i)$, $1\leq i\leq n$, of independent copies of $(X,Y)\in\bbR^k\times\bbR^d$ with distribution $P$. Based on this sample and assuming $Y$ integrable, the goal is to estimate the regression function 
\[
r(x)=\bbE[Y|X=x],\quad x\in\bbR^k.
\]
Local average estimators take the form
\begin{equation}\label{eq:r_n}
\hat r_n(x)=\sum_{i=1}^n W_{ni}(x) Y_i
\end{equation}
with $W_{n1}(x),\ldots,W_{nn}(x)$ the \emph{local weights} at $x$.  The local weights are  assumed to be measurable functions of  $x$ and  $X_1,\ldots,X_n$ but not to depend on $Y_1,\ldots, Y_n$, that is 
\begin{equation}\label{eq:X-property}
W_{ni}(x)=W_{ni}(x;X_1,\ldots,X_n),\quad 1\leq i\leq n.
\end{equation}
For the convenience of notation, the dependency on $X_1,\ldots,X_n$ is implicit. In this paper, we focus only on the case of \emph{probability weights} satisfying
\begin{equation}\label{eq:proba_weights}
 W_{ni}(x)\geq 0,\  1\leq i\leq n,\quad \mbox{and}\quad \sum_{i=1}^n W_{ni}(x)=1.
\end{equation}
Stone's Theorem states the universal consistency of the  regression estimate in $\mathrm{L}^p$-norm. 
\begin{theorem}[\cite{Stone_1977}]\label{thm:stone}
	Assume the probability weights~\eqref{eq:proba_weights} satisfy the following three conditions:
	\begin{itemize}
	\item[i)] there is $C>0$ such that $\bbE\left[\sum_{i=1}^n W_{ni}(X)g(X_i)\right]\leq C\bbE[g(X)]$ for all $n\geq 1$ and  measurable $g:\bbR^k\to [0,+\infty)$ such that $\bbE[g(X)]<\infty$; 
	\item[ii)] for all $\varepsilon>0$, $\sum_{i=1}^n W_{ni}(X) \mathds{1}_{\{\|X_i-X\|>\varepsilon \}}\to 0$ in probability as $n\to+\infty$;
	\item[iii)] $\max_{1\leq i\leq n} W_{ni}(X)\to 0$ in probability as $n\to+\infty$.	
	\end{itemize}
	Then, for all  $p\geq 1$ and $(X,Y)\sim P$ such that $\bbE[\|Y\|^p]<\infty$, 
	\begin{equation}\label{eq:stone}
	\bbE\left[\|\hat r_n(X)-r(X)\|^p\right]\longrightarrow 0 \quad \mbox{as $n\to+\infty$}.
	\end{equation}
	Conversely, if Equation~\eqref{eq:stone} holds, then the probability weights must satisfy conditions $i)-iii)$.
\end{theorem}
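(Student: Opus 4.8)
The plan is to establish the two implications separately; the direct one, that conditions i)--iii) imply \eqref{eq:stone}, is the substantive part, while the converse reduces to classical counterexamples. For the direct part I would first reduce to a bounded response: given $(X,Y)\sim P$ with $\bbE[\|Y\|^p]<\infty$, replace $Y$ by its radial truncation $Y_L:=Y\min(1,L/\|Y\|)$, a deterministic function of $Y$ with $\|Y_L\|\le L$ and $\delta_L:=\bbE[\|Y-Y_L\|^p]\to0$ as $L\to\infty$. Writing $r_L(x):=\bbE[Y_L\mid X=x]$ and letting $\hat r_n^{(L)}$ be the estimator built from $(X_i,(Y_i)_L)$ — the weights are unchanged since they ignore the responses — a triangle inequality in $\mathrm{L}^p$ bounds $(\bbE[\|\hat r_n(X)-r(X)\|^p])^{1/p}$ by the sum of the $\mathrm{L}^p$-norms of $\sum_i W_{ni}(X)(Y_i-(Y_i)_L)$, of $\hat r_n^{(L)}(X)-r_L(X)$, and of $r_L(X)-r(X)$. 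Conditional Jensen gives $\bbE[\|r_L(X)-r(X)\|^p]\le\delta_L$. For the first term, the key observation is that, since $W_{ni}(X)$ is independent of $(Y_1,\dots,Y_n)$ and $(X_i,Y_i)\sim P$, condition i) upgrades to $\bbE[\sum_i W_{ni}(X)h(X_i,Y_i)]\le C\,\bbE[h(X,Y)]$ for every integrable $h\ge0$; applied with $h(x,y)=\|y-y_L\|^p$, after Jensen's inequality for the probability weights, this bounds that term by $C\delta_L$. As the middle term tends to $0$ once the bounded case is settled, letting $n\to\infty$ then $L\to\infty$ completes the reduction.

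For bounded $Y$, say $\|Y\|\le L$ (hence $\|r\|\le L$), I would use the decomposition
\[
\hat r_n(X)-r(X)=\underbrace{\textstyle\sum_{i=1}^n W_{ni}(X)(Y_i-r(X_i))}_{A_n}+\underbrace{\textstyle\sum_{i=1}^n W_{ni}(X)(r(X_i)-r(X))}_{B_n}.
\]
Both summands are bounded by $2L$ because the weights are probabilities, so $\bbE[\|\hat r_n(X)-r(X)\|^p]\le(4L)^{p-1}(\bbE\|A_n\|+\bbE\|B_n\|)$ and it suffices to treat $p=1$. For the noise term $A_n$, conditioning on $(X,X_1,\dots,X_n)$ freezes the weights and leaves the $Y_i-r(X_i)$ conditionally independent with conditional mean $0$; the cross terms vanish and $\bbE[\|A_n\|^2]\le4L^2\,\bbE[\sum_i W_{ni}(X)^2]\le4L^2\,\bbE[\max_i W_{ni}(X)]$, which tends to $0$ by condition iii) and bounded convergence, whence $\bbE\|A_n\|\le(\bbE\|A_n\|^2)^{1/2}\to0$. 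For the bias term $B_n$, given $\delta>0$ I would approximate $r$ in $\mathrm{L}^1(P_X)$ by a continuous, compactly supported $\tilde r$ with $\bbE\|r(X)-\tilde r(X)\|<\delta$ and split $B_n$ into $\sum_i W_{ni}(X)(r(X_i)-\tilde r(X_i))$, $\sum_i W_{ni}(X)(\tilde r(X_i)-\tilde r(X))$ and $\tilde r(X)-r(X)$: condition i) bounds the first in expectation by $C\delta$, the third is deterministic and $<\delta$, and for the middle one uniform continuity of $\tilde r$ gives, for any $\eta>0$, an $\varepsilon>0$ with $\|\tilde r(X_i)-\tilde r(X)\|\le\eta$ when $\|X_i-X\|\le\varepsilon$, so its norm is at most $\eta+2\|\tilde r\|_\infty\sum_i W_{ni}(X)\mathds{1}_{\{\|X_i-X\|>\varepsilon\}}$, whose expectation tends to $\eta$ by condition ii) and bounded convergence. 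Letting $n\to\infty$, then $\eta\to0$, then $\delta\to0$ yields $\bbE\|B_n\|\to0$, which finishes the direct implication.

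For the converse I would argue by contraposition, exhibiting for each failing condition a distribution $P$ (with the prescribed $X$-marginal and $\bbE\|Y\|^p<\infty$) for which \eqref{eq:stone} fails. If iii) fails, take $Y$ independent of $X$ equal to $\pm e$ equiprobably for a fixed unit vector $e$, so $r\equiv0$ and $\hat r_n(X)=\sum_i W_{ni}(X)Y_i$; conditioning on all responses except the one carrying the largest weight shows $\bbE[\|\hat r_n(X)\|^p]\ge\tfrac12\,\bbE[(\max_i W_{ni}(X))^p]$, which stays bounded away from $0$ along the subsequence witnessing the failure. The necessity of ii) and of i) follows from similar but more delicate constructions — a bounded $r$ whose oscillation at scale $\varepsilon$ keeps $B_n$ from vanishing, and a conditional-variance lower bound for i) — as in \cite{Stone_1977} and \cite{gyorfi}. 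The main obstacle, in my view, is not the direct implication, whose ingredients (the truncation reduction, the conditional-variance bound for $A_n$, the $\mathrm{L}^1$-approximation for $B_n$) are by now routine, but rather making the converse rigorous, especially the necessity of condition i), where the right counterexample is least transparent.
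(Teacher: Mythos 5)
The paper does not prove this statement at all: Theorem~\ref{thm:stone} is quoted as a classical result, with the proof delegated to \cite{Stone_1977} and to \cite{biau_2015} (see the remark following the theorem). So there is no in-paper argument to compare against, and your proposal has to be judged on its own merits. The direct implication is correct and follows the standard route: the truncation reduction is sound (and your ``upgrade'' of condition i) to functions $h(X_i,Y_i)$, obtained by conditioning on $\sigma(X,X_1,\dots,X_n)$ and using that $\bbE[h(X_i,Y_i)\mid X,X_1,\dots,X_n]=g(X_i)$ with $g(x)=\bbE[h(X,Y)\mid X=x]$, is exactly the device the paper itself uses later, in the proof of Theorem~\ref{thm:main}, to control the truncation error via $g_M$); the variance--bias split with the conditional second-moment bound $\bbE\|A_n\|^2\le 4L^2\,\bbE[\max_i W_{ni}(X)]$ and the $\mathrm{L}^1(P_X)$-approximation of $r$ by a uniformly continuous $\tilde r$ are both correct. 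One cosmetic slip: $\tilde r(X)-r(X)$ is not deterministic; what you use, and what is true, is that its expected norm is below $\delta$.

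The genuine gap is in the converse. You give a complete argument only for the necessity of iii) (the Rademacher-type lower bound $\bbE\|\hat r_n(X)\|^p\ge\tfrac12\,\bbE[(\max_iW_{ni}(X))^p]$ is fine), but the necessity of i) and ii) is asserted rather than proved. These are not symmetric in difficulty: necessity of ii) still admits a reasonably direct counterexample (a bounded $r$ oscillating at scale $\varepsilon$), but necessity of i) requires a uniform-boundedness/gliding-hump type construction producing an integrable $g\ge0$ for which $\sum_iW_{ni}(X)g(X_i)$ fails to converge in $\mathrm{L}^1$ whenever the constants $C$ blow up; this is the part of Stone's original proof that cannot be waved away, and as written your proposal leaves it open. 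Since the paper treats the whole theorem as a citation, this does not affect anything downstream, but as a self-contained proof of the stated ``if and only if'' your text establishes only the forward direction plus one third of the converse.
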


\begin{remark}
Stone's theorem is usually stated in dimension $d=1$. Since the convergence of random vectors $\hat r_n(X)\to r(X)$ in $\mathrm{L}^p$ is equivalent to convergence in $\mathrm{L}^p$ of all the components, the extension to the dimension $d\geq 2$ is straightforward. Furthermore, more general weights than probability weights can be considered: condition~\eqref{eq:proba_weights} can be dropped and replaced by the weaker assumptions that
\[
|W_{ni}(X)|\leq M \quad \mbox{a.s. for some $M>0$.}
\]
and 
\[
\sum_{i=1}^n W_{ni}(X)\to 1 \mbox{  in probability}.
\]
Such general weights will not be considered in the present paper and we therefore stick to probability weights. The reader can refer to \cite{biau_2015} for a complete proof of Stone's theorem together with a discussion.
\end{remark}

\begin{example}\label{example1}
The following two examples of kernel weights and nearest neighbor weights are the most important ones in the literature and we refer to \cite{gyorfi} Chapter~5 and~6 respectively for more details. 
\begin{itemize}
    \item The kernel weights are defined by
    \begin{equation}\label{eq:kernel-weights}
    W_{ni}(x)=\frac{K\Big(\frac{x-X_i}{h_n}\Big)}{\sum_{j=1}^n K\Big(\frac{x-X_j}{h_n}\Big)},\quad 1\leq i\leq n
    \end{equation}
    if the denominator is nonzero, and $1/n$ otherwise. Here the bandwidth $h_n>0$ depends only on the sample size $n$ and the function ${K:\mathbb{R}^k\to[0,+\infty)}$ is called a kernel.
    In this case, the estimator \eqref{eq:r_n} corresponds to the Nadaraya-Watson estimator of the regression function \citep{nadaraya_1964,watson_1964}.
    We say that $K$ is a boxed kernel if there are constants $R_2\geq R_1>0$ and $M_2\geq M_1>0$ such that
    \[
    M_1\mathds{1}_{\{\|x\|\leq R_1\}} \leq K(x)\leq     M_2\mathds{1}_{\{\|x\|\leq R_2\}},\quad x\in\mathbb{R}^k.
    \]
    Theorem~5.1 in \cite{gyorfi} states that, for a boxed kernel, the kernel weights \eqref{eq:kernel-weights} satisfy conditions $i)-iii)$ of Theorem~\ref{thm:stone} if and only if $h_n\to 0$ and $nh_n^k\to +\infty$ as $n\to+\infty$.
    \item The nearest neighbor (NN) weights are defined by
    \begin{equation}\label{eq:nearest-neighbor-weights}
    W_{ni}(x)=\begin{cases} \frac{1}{\kappa_n} & \mbox{if $X_i$ belongs to the $\kappa_n$-NN of $x$}\\ 0 &\mbox{otherwise} \end{cases},
    \end{equation}
    where the number of neighbors $\kappa_n\in\{1,\ldots,n\}$ depends only on the sample size. Recall that the $\kappa_n$-NN of $x$ within the sample $(X_i)_{1\leq i\leq n}$ are obtained by sorting the distances $\|X_i-x\|$ in increasing order and keeping the $\kappa_n$ points with the smallest distances -- as discussed in \cite{gyorfi} Chapter~6, several rules can be used to break ties such as lexicographic or random tie breaking.  Theorem~6.1 in the same reference states that the nearest neighbor  weights \eqref{eq:nearest-neighbor-weights} satisfy conditions $i)-iii)$ of Theorem~\ref{thm:stone} if and only if $\kappa_n\to +\infty$ and $\kappa_n/n\to 0$ as $n\to+\infty$.
\end{itemize}
\end{example}

\begin{example}\label{example2} Interestingly, some variants of the celebrated Breiman's Random Forest \citep{Breiman_2001} produce probability weights satisfying the assumptions of Stone's theorem. In Breiman's Random Forest, the splits involve both the covariates and the response variable so that the associated weighs $W_{ni}(x)=W_{ni}(x;(X_l,Y_l)_{1\leq l\leq n})$ are not in the form~\eqref{eq:X-property}.
\cite{scornet_2016} considers two simplified version of infinite random forest where the associated weights $W_{ni}(x)$ do not depend on the response values and satisfy the so call $X$-property, that is they are in the  form~\eqref{eq:X-property}. For totally non adaptive forests, the trees are grown thanks to a binary splitting rule that does not use the training sample and is totally random; the author shows that the  probability weights associated to the infinite forest satisfy the assumptions of Stone's theorem under the condition that the number of leaves grows to infinity at a rate smaller than $n$ and the  leaf volume tends to zero in probability (see Theorem~4.1 and its proof). For $q$-quantile forest, the binary splitting rules involves only the covariates and the author shows that the weights  associated  to the infinite forest satisfy the assumptions of Stone's theorem provided the subsampling number $a_n$ satifies $a_n\to+\infty$ and $a_n/n\to 0$ (see Theorem~5.1 and its proof).
\end{example}

\subsection{Wasserstein spaces}
We recall the definition and some elementary facts on Wasserstein spaces on $\bbR^d$. More details and further results on optimal transport and Wasserstein spaces can be found in the monograph by \cite{Villani_2009}, Chapter 6. 

For $p\geq 1$, the Wasserstein space $\mathcal{W}_p(\bbR^d)$  is defined as the set  Borel probability measures on $\bbR^d$ having a finite moment of order $p$, i.e. such that
\begin{equation}\label{eq:def-Mp}
M_p(\mu)=\Big(\int_{\bbR^d} \|y\|^p\,\mu(\rmd y)\Big)^{1/p}<\infty. 
\end{equation}
It is endowed with the distance defined, for $Q_1,Q_2\in \mathcal{W}_p(\bbR^d)$, by
\begin{equation}\label{eq:wasserstein}
\mathcal{W}_p(Q_1,Q_2)=\inf_{\pi\in \Pi(Q_1,Q_2)}\left(\int \|y_1-y_2\|^p\,\pi(\rmd y_1\rmd y_2)\right)^{1/p},
\end{equation}
where $\Pi(Q_1,Q_2)$ denotes the set of measures on $\bbR^d\times \bbR^d$ with margins $Q_1$ and $Q_2$. A couple $(Z_1,Z_2)$ of random variables with distributions $Q_1$ and $Q_2$ respectively is called a \textit{coupling}. The Wasserstein distance is thus the minimal distance $\|Z_1-Z_2\|_{\mathrm{L}^p}=\bbE[\|Z_1-Z_2\|^p]^{1/p}$ over all possible couplings. Existence of optimal couplings is ensured since $\bbR^d$ is a complete and separable metric space so that the infimum is indeed a minimum. 

Wasserstein distances are generally difficult to compute, but the case $d=1$ is the exception. A simple optimal coupling is provided by the probability inverse transform: for $i=1,2$, let $Q_i\in \mathcal{W}_p(\bbR)$, $F_i$ denotes its cumulative distribution function  and $F_i^{-1}$ its generalized inverse (quantile function). Then, starting from an uniform random variable $U\sim \mathrm{Unif}(0,1)$, an optimal coupling is given by $(Z_1,Z_2)=(F_1^{-1}(U),F_2^{-1}(U))$. Therefore, the Wasserstein distance is explicitly given by
\begin{equation}\label{eq:wasserstein1}
\mathcal{W}_p(Q_1,Q_2)=\left(\int_0^1 |F_1^{-1}(u)-F_2^{-1}(u)|^p \rmd u\right)^{1/p}.
\end{equation}
When $p=1$,  a simple change of variable yields 
\begin{equation}\label{eq:wasserstein2}
\mathcal{W}_1(Q_1,Q_2)=\int_{-\infty}^{+\infty} |F_1(u)-F_2(u)| \rmd u.
\end{equation}

\section{Main results}\label{sec:main}
\subsection{Stone's theorem for distributional regression} 
We now present the main result of the paper which is a natural extension of Stone's theorem to the framework of distributional regression. Given a distribution $(X,Y)\sim P$ on $\bbR^k\times\bbR^d$, we denote by $F$ the marginal distribution of $Y$ and by $F_x$ its conditional distribution given $X=x$. This conditional distribution can be estimated on a sample  $(X_i,Y_i)_{1 \leq i\leq n}$ of independent copies of $(X,Y)$ by the weighted empirical distribution
\begin{equation}\label{eq:def_wed}
    \hat F_{n,x}=\sum_{i=1}^n W_{ni}(x) \delta_{Y_i}
\end{equation}
where $\delta_{y}$ denotes the Dirac mass at point $y\in\bbR^d$.  For probability weights satisfying \eqref{eq:proba_weights}, $\hat F_{n,x}$ is a probability measure and can be viewed as a random element in the complete and separable space $\mathcal{W}_p(\bbR^d)$. We recall that the weights $W_{ni}(x)=W_{ni}(x;X_1,\ldots,X_n)$ implicitly depend on $X_1,\ldots,X_n$ but  not on $Y_1,\ldots,Y_n$.

\begin{theorem}\label{thm:main}
	Assume the probability weights satisfy  conditions $i)-iii)$ from Theorem~\ref{thm:stone}.
	Then, for all  $p\geq 1$ and $(X,Y)$ such that $\bbE[\|Y\|^p]<\infty$, 
	\begin{equation}\label{eq:stone-disreg}
	\bbE\big[\mathcal{W}_p^p(\hat F_{n,X},F_X)\big]\longrightarrow 0 \quad \mbox{as $n\to+\infty$}.
	\end{equation}
	Conversely, if Equation~\eqref{eq:stone-disreg} holds, then the probability weights must satisfy conditions $i)-iii)$.
\end{theorem}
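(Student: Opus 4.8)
\noindent\emph{The converse.} This reduces to the converse part of Theorem~\ref{thm:stone}. The barycenter map $Q\mapsto\int y\,Q(\rmd y)$ is $1$-Lipschitz from $\mathcal{W}_p(\bbR^d)$ to $\bbR^d$: for any coupling $(Z_1,Z_2)$ of $Q_1,Q_2$, $\|\bbE[Z_1-Z_2]\|\le\bbE[\|Z_1-Z_2\|]\le\bbE[\|Z_1-Z_2\|^p]^{1/p}$, and optimizing over couplings gives $\|\int y\,Q_1(\rmd y)-\int y\,Q_2(\rmd y)\|\le\mathcal{W}_p(Q_1,Q_2)$. Since the mean of $\hat F_{n,x}$ is $\hat r_n(x)=\sum_iW_{ni}(x)Y_i$ and that of $F_x$ is $r(x)$, this yields $\|\hat r_n(X)-r(X)\|^p\le\mathcal{W}_p^p(\hat F_{n,X},F_X)$ almost surely; hence if \eqref{eq:stone-disreg} holds (for all $(X,Y)$ with $\bbE[\|Y\|^p]<\infty$) then so does \eqref{eq:stone}, and the converse part of Theorem~\ref{thm:stone} forces conditions $i)$--$iii)$.

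\noindent\emph{The direct implication: decomposition.} The plan is to route the error through the \emph{oracle mixture} $\tilde F_{n,x}:=\sum_{i=1}^nW_{ni}(x)F_{X_i}$ and split, using the triangle inequality in $\mathcal{W}_p(\bbR^d)$ and $(a+b)^p\le 2^{p-1}(a^p+b^p)$,
\[
\mathcal{W}_p^p(\hat F_{n,X},F_X)\le 2^{p-1}\big(\mathcal{W}_p^p(\hat F_{n,X},\tilde F_{n,X})+\mathcal{W}_p^p(\tilde F_{n,X},F_X)\big),
\]
and to prove that the expectation of each term vanishes; I call them the variance term and the bias term. Two elementary facts are used throughout: the ``common weights'' inequality $\mathcal{W}_p^p(\sum_i\lambda_i\mu_i,\sum_i\lambda_i\nu_i)\le\sum_i\lambda_i\mathcal{W}_p^p(\mu_i,\nu_i)$ for probability weights $\lambda_i$ (glue optimal couplings of the pairs $(\mu_i,\nu_i)$), and the identity $\bbE[M_p^p(F_X)]=\bbE[\|Y\|^p]<\infty$.

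\noindent\emph{The bias term.} The common weights inequality with $\nu_i\equiv F_X$ gives $\mathcal{W}_p^p(\tilde F_{n,X},F_X)\le\sum_iW_{ni}(X)\mathcal{W}_p^p(F_{X_i},F_X)$, so it suffices to establish a metric space valued version of Stone's lemma: for any measurable $\xi:\bbR^k\to\mathcal{W}_p(\bbR^d)$ with $\bbE[M_p^p(\xi(X))]<\infty$, conditions $i)$--$iii)$ imply $\bbE[\sum_iW_{ni}(X)\,\mathcal{W}_p^p(\xi(X_i),\xi(X))]\to0$; apply it to $\xi(x)=F_x$. I would prove this by truncating $\xi$ so its range lies in a bounded subset of $\mathcal{W}_p(\bbR^d)$ (harmless in $L^p$ by dominated convergence), approximating $\xi$ in $L^p(P_X;\mathcal{W}_p(\bbR^d))$ by a simple, finitely valued measurable map, and then — using inner regularity of $P_X$ — replacing the Borel level sets of that map by pairwise disjoint compacts $C_j$ and passing to the ``nearest compact'' map $\eta$, which is constant, equal to its $j$-th value, on the open set $U_j\supseteq C_j$ of points strictly closer to $C_j$ than to every other $C_{j'}$. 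The sets $U_j$ are disjoint and $P_X(\bigcup_jU_j)$ can be made arbitrarily close to $1$; a three term split around $\eta$ then reduces matters, via condition $i)$, the fact that the weights sum to one, and condition $ii)$ — used through the observation that if $X$ lies $\delta$-deep in some $U_j$ and $\|X_i-X\|\le\delta$ then $\eta(X_i)=\eta(X)$ — to quantities that are small. This ``nearest compact'' device substitutes for a Tietze type extension, which is delicate because $\mathcal{W}_p(\bbR^d)$ need not be an absolute retract; for $d=1$ the lemma can instead be read off from Theorem~\ref{thm:stone} applied to $(X,F_X^{-1}(u))$, $u\in(0,1)$, and formula \eqref{eq:wasserstein1}.

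\noindent\emph{The variance term (the main obstacle).} Conditionally on $X_1,\dots,X_n$ (and on $X$), $\hat F_{n,x}=\sum_iW_{ni}(x)\delta_{Y_i}$ is a weighted empirical measure of the independent variables $Y_i\sim F_{X_i}$, with deterministic conditional mean $\tilde F_{n,x}$, and the task is to show it concentrates around $\tilde F_{n,x}$ as $\max_iW_{ni}(X)\to0$ (condition $iii)$). I would first reduce, by projecting the $Y_i$ and the $F_{X_i}$ onto a ball $\bar B(0,R)$ and absorbing the error through condition $i)$ and $\bbE[\|Y\|^p\mathds{1}_{\{\|Y\|>R\}}]\to0$, to the case of measures supported on $\bar B(0,R)$; there I would run a Fournier--Guillin type partitioning argument. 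Splitting $\bar B(0,R)$ into $N$ cells $G$ of diameter $\delta$ (with $N$ of order $(R/\delta)^d$) yields
\[
\mathcal{W}_p^p(\hat F_{n,X},\tilde F_{n,X})\le\delta^p+(2R)^p\sum_G\big(\hat F_{n,X}(G)-\tilde F_{n,X}(G)\big)_+,
\]
and since $\bbE[(\hat F_{n,X}(G)-\tilde F_{n,X}(G))_+\mid X_1,\dots,X_n]\le\sqrt{\max_iW_{ni}(X)\,\tilde F_{n,X}(G)}$ — the conditional variance of $\hat F_{n,X}(G)$ being at most $\max_iW_{ni}(X)\,\tilde F_{n,X}(G)$ — Cauchy--Schwarz over the $N$ cells bounds the expected transport cost by $(2R)^p\sqrt{N}\,\bbE[\sqrt{\max_iW_{ni}(X)}]$. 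Letting $\delta\to0$ slowly while $\max_iW_{ni}(X)\to0$ in probability (and boundedly) finishes this term. For $d=1$ it is much simpler: by \eqref{eq:wasserstein2} one integrates over $t\in\bbR$ the estimate $\bbE[|\hat F_{n,X}(t)-\tilde F_{n,X}(t)|\mid X_1,\dots,X_n]\le\tfrac12\sqrt{\max_iW_{ni}(X)}$ after truncating the tails (again controlled by condition $i)$). Combining the bias and variance bounds gives \eqref{eq:stone-disreg}.
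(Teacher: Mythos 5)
Your proposal is correct, but it follows a genuinely different route from the paper's. For the direct implication the paper first treats $d=1$: after truncating $Y$ to $[-M,M]$ it bounds $\mathcal{W}_p^p$ by $(2M)^{p-1}\mathcal{W}_1$, rewrites $\mathcal{W}_1$ via \eqref{eq:wasserstein2}, and applies Theorem~\ref{thm:stone} to the auxiliary regression problems $(X,\mathds{1}_{\{Y\leq z\}})$ for each fixed $z$, with the truncation error absorbed through condition $i)$; the case $d\geq 2$ is then deduced by projecting onto directions $u\in\mathbb{S}^{d-1}$, establishing convergence in the max-sliced Wasserstein distance via a compactness/Lipschitz argument on the sphere, invoking the equivalence of $\overline{SW}_p$ and $\mathcal{W}_p$ (Theorem~\ref{thm:BG21}) to get convergence in probability, and finishing with a uniform integrability argument. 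You instead work directly in $\bbR^d$ with a bias--variance decomposition around the oracle mixture $\tilde F_{n,x}=\sum_i W_{ni}(x)F_{X_i}$: the bias is handled by a metric-space-valued Stone's lemma (your ``nearest compact'' device correctly substitutes for the continuous approximation step of the classical proof, and the three-term split using conditions $i)$, $ii)$ and the boundedness of the locally constant approximant is sound), while the variance is handled by a Fournier--Guillin partitioning bound whose key estimate $\mathrm{Var}(\hat F_{n,X}(G)\mid X,X_1,\dots,X_n)\leq \max_i W_{ni}(X)\,\tilde F_{n,X}(G)$ is correct and is exactly where condition $iii)$ enters. What each approach buys: the paper's argument is short for $d=1$ and outsources the multivariate difficulty to the external max-sliced equivalence result of \cite{BG21}, but is purely qualitative; yours is self-contained, dimension-free from the start, and quantitative --- your decomposition is essentially the one underlying Proposition~\ref{prop:upperbound} --- at the price of a substantially heavier bias lemma whose full write-up (measurable simple approximation in $L^p(P_X;\mathcal{W}_p(\bbR^d))$, inner regularity, depth-$\delta$ sets) would need to be carried out carefully. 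Your converse argument via the $1$-Lipschitz barycenter map is exactly the observation the paper makes immediately after the theorem statement, so there is no divergence there.
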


It is worth noticing that 
\[
\bbE\left[\|\hat r_n(X)-r(X)\|^p\right] \leq \bbE\big[\mathcal{W}_p^p(\hat F_{n,X},F_X)\big]
\]
so that Theorem~\ref{thm:main} implies  Theorem~\ref{thm:stone} in a straightforward way. The proof of Theorem~\ref{thm:main} is postponed to Section~\ref{sec:proofs}. It first considers the case $d=1$ where the Wasserstein distance is explicitly given by formula~\eqref{eq:wasserstein1}. Then, the results is extended to higher dimension $d\geq 2$ thanks to the notion of max-sliced Wasserstein distance \citep{BG21} which allows to reduce the convergence of measures on  $\bbR^d$ to the convergence of their uni-dimensional projections (a precise statement is given in Theorem~\ref{thm:BG21} below).

\subsection{Rates of convergence}
	We next consider rates of convergence in the minimax sense. Note that similar questions and results have been established in \cite{PDNT_22}, where the second order Cram\'er's distance was considered, i.e. 
	\[
	\|\hat F_{n,X}-F_X\|_{L_2}^2=\int_{\bbR}|\hat F_{n,X}(y)-F_X(y)|^2\,\rmd y.
	\]
	We focus here on  the Wasserstein distance $\mathcal{W}_p(\hat F_{n,X},F_X)$ 	and consider only the case $d=1$ and $p=1$ which allows the explicit expression~\eqref{eq:wasserstein2}. The other cases seem harder to analyze and are beyond the scope of the present paper. Our first result considers the error in Wasserstein distance when $X=x$ is fixed.
    \begin{proposition}\label{prop:upperbound}
    Assume $d=1$ and  $(X,Y)\sim P$ such that $\bbE[|Y|]<\infty$. Then,
        \[
	\bbE \big[\mathcal{W}_1(\hat F_{n,x}, F_x)\big]\leq \bbE \Big[  \sum_{i=1}^n W_{ni}(x)\mathcal{W}_1(F_{X_i},F_{x})\Big]+ M(x)	\bbE\Big[\sum_{i=1}^n W_{ni}^2(x)\Big]^{1/2},
	\]
	where $M(x)=\int_{\bbR}\sqrt{F_x(z)(1-F_x(z))}\mathrm{d}z$.
    \end{proposition}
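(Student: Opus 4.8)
The plan is to compare $\hat F_{n,x}$ with an auxiliary weighted empirical distribution built from a synthetic i.i.d.\ sample drawn from $F_x$ itself, coupled to the data through common uniform variables. Since the law of $\mathcal{W}_1(\hat F_{n,x},F_x)$ depends on $(X_i,Y_i)_{1\le i\le n}$ only through their joint distribution, I would first assume, after enlarging the probability space if necessary, that $Y_i=F_{X_i}^{-1}(U_i)$ for i.i.d.\ $U_1,\dots,U_n\sim\mathrm{Unif}(0,1)$ independent of $(X_i)_{1\le i\le n}$ (this is compatible with the conditional law $Y_i\mid X_i\sim F_{X_i}$). Then I set $\tilde Y_i=F_x^{-1}(U_i)$, so that $\tilde Y_1,\dots,\tilde Y_n$ are i.i.d.\ with law $F_x$, independent of $(X_i)_{1\le i\le n}$, and $(Y_i,\tilde Y_i)$ is the inverse-transform optimal coupling of $F_{X_i}$ and $F_x$. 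Writing $\tilde F_{n,x}=\sum_{i=1}^n W_{ni}(x)\delta_{\tilde Y_i}$, the triangle inequality in $\mathcal{W}_1(\bbR)$ gives
\[
\mathcal{W}_1(\hat F_{n,x},F_x)\le \mathcal{W}_1(\hat F_{n,x},\tilde F_{n,x})+\mathcal{W}_1(\tilde F_{n,x},F_x),
\]
and I would bound the two terms separately; they will produce, respectively, the bias term and the $M(x)$-term of the statement.

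For the first (bias) term, I note that $\pi=\sum_{i=1}^n W_{ni}(x)\delta_{(Y_i,\tilde Y_i)}$ is a coupling of $\hat F_{n,x}$ and $\tilde F_{n,x}$, so $\mathcal{W}_1(\hat F_{n,x},\tilde F_{n,x})\le \sum_{i=1}^n W_{ni}(x)|Y_i-\tilde Y_i|$ pointwise. Conditioning on $\sigma(X_1,\dots,X_n)$ --- with respect to which the weights are measurable while each $U_i$ is still uniform and independent --- formula~\eqref{eq:wasserstein1} with $p=1$ gives $\bbE[|Y_i-\tilde Y_i|\mid X_1,\dots,X_n]=\mathcal{W}_1(F_{X_i},F_x)$, whence $\bbE[\mathcal{W}_1(\hat F_{n,x},\tilde F_{n,x})]\le \bbE\big[\sum_{i=1}^n W_{ni}(x)\mathcal{W}_1(F_{X_i},F_x)\big]$.

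For the second (variance) term, I would use $\mathcal{W}_1(\tilde F_{n,x},F_x)=\int_{\bbR}|\tilde F_{n,x}(z)-F_x(z)|\,\rmd z$ from~\eqref{eq:wasserstein2} together with Tonelli's theorem (legitimate since the integrand is nonnegative) to reduce to bounding $\bbE[|\tilde F_{n,x}(z)-F_x(z)|]$ for each fixed $z$. Conditionally on $\sigma(X_1,\dots,X_n)$, the variables $\mathds{1}_{\{\tilde Y_i\le z\}}$ are independent Bernoulli with parameter $F_x(z)$ and the weights are deterministic summing to one, so $\bbE[\tilde F_{n,x}(z)\mid X_1,\dots,X_n]=F_x(z)$ and, by Cauchy--Schwarz and the conditional variance computation,
\[
\bbE\big[|\tilde F_{n,x}(z)-F_x(z)|\;\big|\;X_1,\dots,X_n\big]\le \Big(\sum_{i=1}^n W_{ni}^2(x)\Big)^{1/2}\sqrt{F_x(z)\,(1-F_x(z))}.
\]
Taking expectations, applying Jensen's inequality $\bbE\big[(\sum_i W_{ni}^2(x))^{1/2}\big]\le \bbE\big[\sum_i W_{ni}^2(x)\big]^{1/2}$, and integrating in $z$ produces exactly $M(x)\,\bbE[\sum_i W_{ni}^2(x)]^{1/2}$. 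Summing the two bounds yields the proposition.

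I expect the main obstacle to be the very need for this coupling: a plain bias--variance decomposition around the natural centering $z\mapsto\sum_i W_{ni}(x)F_{X_i}(z)$ would leave a conditional variance $\sum_i W_{ni}^2(x)F_{X_i}(z)(1-F_{X_i}(z))$ involving the wrong distribution functions, with no free bound of it that is integrable in $z$ and independent of the data. Replacing the data by the synthetic sample $\tilde Y_i=F_x^{-1}(U_i)$ sharing the same uniforms is precisely what relocates the fluctuation onto a sample genuinely distributed as $F_x$, so that $\sqrt{F_x(z)(1-F_x(z))}$ and hence $M(x)$ appear. The remaining points (measurability and independence bookkeeping in the conditioning steps, the application of Tonelli and Jensen, and the trivial case where the right-hand side is infinite, so that one may assume $F_x\in\mathcal{W}_1(\bbR)$, which makes all three measures above a.s.\ lie in $\mathcal{W}_1(\bbR)$) are routine.
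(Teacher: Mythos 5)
Your proposal is correct and follows essentially the same route as the paper: the inverse-transform coupling $Y_i=F_{X_i}^{-1}(U_i)$, $\tilde Y_i=F_x^{-1}(U_i)$, the triangle inequality through $\tilde F_{n,x}$, the coupling bound $\sum_i W_{ni}(x)|Y_i-\tilde Y_i|$ for the approximation term, and Fubini plus a Cauchy--Schwarz/Jensen step for the estimation term. The only cosmetic difference is that you condition on $\sigma(X_1,\dots,X_n)$ before applying Cauchy--Schwarz and then use Jensen to pull the square root outside, whereas the paper bounds the unconditional first moment by the unconditional second moment in one step; the resulting bound is identical.
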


The first term corresponds to an approximation error due to the fact that we use a biased sample to estimate $F_x$. The more regular the model is, the smaller the approximation error is. The second term is an estimation error due to the fact that we use an empirical mean to estimate $F_x$. This estimator error is smaller if the distribution error has a lower dispersion (as measured by $M(x)$) or if  $\sum_{i=1}^n W_{ni}^2(x)$ is small. Note that in the case of nearest neighbor weights, $1/\sum_{i=1}^n W_{ni}^2(x)$ is exactly equal to $\kappa$ so that this quantity is often referred to as the \textit{effective sample size} and the estimation error is proportional to the square root of the expected reciprocal effective sample size. 

In view of Proposition~\ref{prop:upperbound}, we introduce the following classes of functions.
\begin{definition}\label{def:class-D} Let $\mathcal{D}(H,L,M)$ be the class of distributions $(X,Y)\sim P$ on $\bbR^k\times\bbR$ satisfying:
	\begin{itemize}
	\item[a)] $X\in [0,1]^k$ a.s. and  $\bbE|Y|<\infty$,
	\item[b)] for all $x,x'\in[0,1]^k$, $\mathcal{W}_1(F_x,F_{x'})\leq L\|x-x'\|^H$,
	\item[c)] for all $x\in[0,1]^k$, $\int_{\bbR} \sqrt{F_{x}(z)(1-F_{x}(z))}\,\mathrm{d} z \leq M$.
	\end{itemize}
	\end{definition}
The definition of the class together with Proposition~\ref{prop:upperbound} entails that the expected error is uniformly bounded on the class $\mathcal{D}(H,L,M)$ by
	\begin{align}
	&\bbE\Big[ \mathcal{W}_1(\hat F_{n,X}, F_X)\Big]\nonumber\\
	&\leq L\bbE \Big[  \sum_{i=1}^n W_{ni}(X)\| X_i-X\|^H\Big] + M \bbE\Big[\sum_{i=1}^n W_{ni}^2(X)\Big]^{1/2}.\label{eq:general-bound}
    \end{align}
    As a consequence, Proposition~\ref{prop:upperbound} allows to derive explicit bounds uniformly on $\mathcal{D}(H,L,M)$  for the kernel and nearest neighbor methods from Example~\ref{example1}. For the sake of simplicity, we consider the uniform kernel only.
    	\begin{corollary}\label{cor:kernel}
    Let $\hat F_{n,X}$ be given by the kernel method with uniform kernel $K(x)=\mathds{1}_{\{\|x\|\leq 1\}}$ and weights given by Equation~\eqref{eq:kernel-weights}. If $P\in\mathcal{D}(H,L,M)$, then
    \[
	\bbE\big[\mathcal{W}_1(\hat F_{n,X}, F_X)\big]
	 \leq Lh_n^H +M\sqrt{(2+1/n)c_k}(nh_n^k)^{-1/2}+Lk^{H/2}c_k(nh_n^k)^{-1}
    \]
	 with $c_k=k^{k/2}$.
    \end{corollary}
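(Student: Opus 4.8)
The plan is to insert the uniform-kernel weights into the general bound~\eqref{eq:general-bound}, which already holds uniformly over $P\in\mathcal{D}(H,L,M)$, and to estimate its two terms. With $K=\mathds{1}_{\{\|\cdot\|\le 1\}}$ and bandwidth $h_n$, write $\mu$ for the law of $X$, set $N(x)=\sum_{j=1}^n\mathds{1}_{\{\|x-X_j\|\le h_n\}}$ (the number of sample points in the closed ball $B(x,h_n)$), and let $E_0=\{N(X)=0\}$ be the ``empty ball'' event. On $E_0^c$ one has $W_{ni}(X)=\mathds{1}_{\{\|X-X_i\|\le h_n\}}/N(X)$, hence $\sum_{i=1}^n W_{ni}(X)\|X_i-X\|^H\le h_n^H$ and $\sum_{i=1}^n W_{ni}^2(X)=1/N(X)$; on $E_0$ one has $W_{ni}(X)=1/n$ for all $i$, hence $\sum_{i=1}^n W_{ni}(X)\|X_i-X\|^H\le(\sqrt k)^H=k^{H/2}$ (using $\|X_i-X\|\le\mathrm{diam}([0,1]^k)=\sqrt k$) and $\sum_{i=1}^n W_{ni}^2(X)=1/n$.

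It follows that the first term of~\eqref{eq:general-bound} is at most $Lh_n^H+Lk^{H/2}\,\mathbb{P}(E_0)$, so it remains to control $\mathbb{P}(E_0)$ and $\bbE[\sum_{i=1}^n W_{ni}^2(X)]$. Conditioning on $X=x$, the variable $N(x)$ is $\mathrm{Binomial}(n,p(x))$ with $p(x)=\mu(B(x,h_n))$ --- this is where independence of $(X,Y)$ from the training sample enters. The elementary identity $\bbE[(N(x)+1)^{-1}]=(1-(1-p(x))^{n+1})/((n+1)p(x))$, the inequality $\mathds{1}_{\{N(x)\ge 1\}}/N(x)\le 2/(N(x)+1)$, and $q(1-q)^n\le 1/(n+1)$ for all $q\in[0,1]$, then yield
\[
\mathbb{P}(E_0\mid X=x)=(1-p(x))^n\le\frac{1}{(n+1)p(x)},\qquad
\bbE\Big[\sum_{i=1}^n W_{ni}^2(X)\,\Big|\,X=x\Big]\le\frac{2+1/n}{(n+1)\,p(x)}.
\]

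The main step is then to integrate these bounds in $x$, i.e.\ to control $\int_{[0,1]^k}\mu(B(x,h_n))^{-1}\,\mu(\rmd x)$ uniformly over probability measures $\mu$ supported on $[0,1]^k$. I would partition $\bbR^k$ into half-open cubes of side $h_n/\sqrt k$; each such cube has diameter $h_n$, so the cube $C$ containing a point $x$ satisfies $C\subseteq B(x,h_n)$, whence $\mu(B(x,h_n))\ge\mu(C)$ for every $x\in C$. Splitting the integral over the cubes and using $\int_C\mu(C)^{-1}\mu(\rmd x)\le 1$ on the cubes of positive mass, the integral is bounded by the number of cubes of this grid meeting $[0,1]^k$, which is at most $c_kh_n^{-k}$ with $c_k=k^{k/2}$. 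Substituting this into the two displayed conditional bounds and using $1/(n+1)\le 1/n$, the first term of~\eqref{eq:general-bound} becomes $\le Lh_n^H+Lk^{H/2}c_k(nh_n^k)^{-1}$ and the estimation term $M\,\bbE[\sum_{i=1}^n W_{ni}^2(X)]^{1/2}$ becomes $\le M\sqrt{(2+1/n)c_k}\,(nh_n^k)^{-1/2}$; summing the two gives exactly the stated inequality.

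I expect the covering estimate for $\int\mu(B(x,h_n))^{-1}\,\mu(\rmd x)$ --- which is precisely what produces the dimensional constant $c_k$ --- to be the only genuinely non-routine point; the binomial moment bounds and the dichotomy over $E_0$ are standard, and the remainder is bookkeeping. A small point to track is the closed/open ball convention: the uniform kernel involves the closed ball, which is what makes ``a cube of diameter $h_n$ containing $x$ lies in $B(x,h_n)$'' hold without loss.
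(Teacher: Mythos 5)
Your proof is correct and takes essentially the same route as the paper's: the same dichotomy on the empty-ball event, the same binomial conditioning, and the same reduction to bounding $\int \mu(B(x,h_n))^{-1}\,\mu(\rmd x)$ by $c_k h_n^{-k}$. The only difference is cosmetic: you prove inline (via the exact identity for $\bbE[(N+1)^{-1}]$ and the cube-covering argument) the two facts the paper cites from \citet[Lemma~4.1 and Equation~5.1]{gyorfi}, and all constants match.
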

    
    \begin{corollary}\label{cor:knn}
    Let $\hat F_{n,X}$ be given by the nearest neighbor method with weights given by Equation~\eqref{eq:nearest-neighbor-weights} and assume $P\in\mathcal{D}(H,L,M)$. Then,
    \[
	 \bbE\big[\mathcal{W}_1(\hat F_{n,X}, F_X)\big]
	 \leq \begin{cases}
		 	L8^{H/2}( \kappa_n/n)^{H/2} +M\kappa_n^{-1/2}& \mbox{if } k=1,\\
		 	L\tilde{c}_k^{H/2}( \kappa_n/n)^{H/k} +M\kappa_n^{-1/2}& \mbox{if } k\geq 2,
		 \end{cases}
    \]
    where $\tilde{c}_k$  depends only on the dimension $k$ and is defined in \citet[Theorem~2.4]{biau_2015}.
    \end{corollary}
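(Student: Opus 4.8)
The plan is to read the corollary off the uniform bound~\eqref{eq:general-bound}, so that the only work left is to evaluate its two terms for the nearest neighbor weights~\eqref{eq:nearest-neighbor-weights}. The estimation term is immediate and dimension-free: exactly $\kappa_n$ of the weights $W_{ni}(x)$ equal $1/\kappa_n$ while the others vanish, hence $\sum_{i=1}^n W_{ni}^2(x)=\kappa_n^{-1}$ identically and the second term of~\eqref{eq:general-bound} equals $M\kappa_n^{-1/2}$, for every $k$ and every law of $X$.

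For the approximation term, let $R_{\kappa_n}(x)$ denote the distance from $x$ to its $\kappa_n$-th nearest neighbor in the sample (ties broken by the fixed rule of Example~\ref{example1}). Every sample point receiving a positive NN weight at $X$ lies within distance $R_{\kappa_n}(X)$ of $X$, so $\sum_{i=1}^n W_{ni}(X)\|X_i-X\|^H\le R_{\kappa_n}(X)^H$ almost surely, whence
\[
\bbE\Big[\sum_{i=1}^n W_{ni}(X)\|X_i-X\|^H\Big]\le \bbE\big[R_{\kappa_n}(X)^H\big]\le \bbE\big[R_{\kappa_n}(X)^2\big]^{H/2},
\]
the last inequality by Jensen, since $t\mapsto t^{H/2}$ is concave on $[0,\infty)$ — we may assume $H\le1$, as for $H>1$ condition~b) together with the triangle inequality for $\mathcal{W}_1$ forces the conditional law $F_x$ to be independent of $x$ and the statement is trivial. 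It then remains to control the expected squared $\kappa_n$-th nearest neighbor distance of an i.i.d. sample supported in $[0,1]^k$ (condition~a)), which is precisely \citet[Theorem~2.4]{biau_2015}: that result gives $\bbE[R_{\kappa_n}(X)^2]\le 8\,\kappa_n/n$ when $k=1$ and $\bbE[R_{\kappa_n}(X)^2]\le \tilde c_k(\kappa_n/n)^{2/k}$ when $k\ge2$. Substituting these into the display, raising to the power $H/2$, and inserting the result together with the estimation term into~\eqref{eq:general-bound} yields $L\,8^{H/2}(\kappa_n/n)^{H/2}+M\kappa_n^{-1/2}$ for $k=1$ and $L\,\tilde c_k^{H/2}(\kappa_n/n)^{H/k}+M\kappa_n^{-1/2}$ for $k\ge2$, as claimed.

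There is no genuine obstacle: the statistical substance is already in Proposition~\ref{prop:upperbound}/\eqref{eq:general-bound} and in the classical moment bound for nearest neighbor distances, and the argument only has to (i) dominate the weighted average of the distances $\|X_i-X\|^H$ by the single $\kappa_n$-th nearest neighbor distance, (ii) apply Jensen in the admissible range $H\le1$, and (iii) verify the bounded-support hypothesis of \citet[Theorem~2.4]{biau_2015}, which holds by Definition~\ref{def:class-D}. The only point worth flagging is that the rate $(\kappa_n/n)^{H/2}$ obtained for $k=1$ coincides with the $k=2$ rate and is worse than the $(\kappa_n/n)^H$ one might naively expect; this is intrinsic, since for non-absolutely-continuous distributions on $[0,1]$ the expected squared $\kappa_n$-th nearest neighbor distance is genuinely of order $\kappa_n/n$.
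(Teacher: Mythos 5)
Your proposal is correct and follows essentially the same route as the paper's proof: identify $\sum_i W_{ni}^2(X)=\kappa_n^{-1}$, dominate the weighted average of $\|X_i-X\|^H$ by the $\kappa_n$-th nearest neighbor distance raised to the power $H$, apply Jensen to pass to the second moment, and conclude with \citet[Theorem~2.4]{biau_2015}. Your explicit remark that Jensen requires the exponent restriction (and that $H>1$ makes the class degenerate) is a small point of care the paper leaves implicit, but it does not change the argument.
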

    One can see  that consistency holds --- i.e. the expected error tends to $0$ as $n\to+\infty$ --- as soon as  $h_n\to 0$ and $nh_n^k\to+\infty$ for the kernel method and  $\kappa_n/n\to 0$ and $\kappa_n\to +\infty$ for the nearest neighbor method.
    
    \medskip
	The next theorem provides the optimal minimax rate of convergence on the class $\mathcal{D}(H,L,M)$. 	We say that two sequences of positive numbers $(a_n)$ and $(b_n)$ have the same rate of convergence, noted $a_n\asymp b_n$, if the ratios $a_n/b_n$ and $b_n/a_n$ remain bounded as $n\to+\infty$. 
	\begin{theorem}\label{thm:minimax} The optimal minimax rate of convergence on the class $\mathcal{D}(H,L,M)$ is given by
	\[
	\inf_{\hat F_n} \sup_{P\in \mathcal{D}(H,L,M)}\bbE[\mathcal{W}_1(\hat F_{n,X}, F_X)]  \asymp  n^{-H/(2H+k)}.
	\]
	\end{theorem}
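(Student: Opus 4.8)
The plan is to establish the minimax rate by proving the upper and lower bounds separately.

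For the upper bound, I would exhibit an estimator achieving the rate $n^{-H/(2H+k)}$ uniformly over $\mathcal{D}(H,L,M)$. The natural candidate is one of the local weight estimators already analyzed. Using Corollary~\ref{cor:kernel} for the uniform kernel, I would optimize over the bandwidth: balancing the bias term $Lh_n^H$ against the estimation term $M(nh_n^k)^{-1/2}$ leads to the choice $h_n\asymp n^{-1/(2H+k)}$, which makes both terms of order $n^{-H/(2H+k)}$ (one checks the residual term $L k^{H/2}c_k(nh_n^k)^{-1}$ is of smaller order since $nh_n^k\asymp n^{2H/(2H+k)}\to\infty$). Alternatively, Corollary~\ref{cor:knn} with $\kappa_n\asymp n^{2H/(2H+k)}$ gives the same rate. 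This yields $\sup_{P\in\mathcal{D}(H,L,M)}\bbE[\mathcal{W}_1(\hat F_{n,X},F_X)]\lesssim n^{-H/(2H+k)}$.

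For the lower bound, I would use a standard reduction to hypothesis testing (Le Cam's two-point method or, more likely, Fano-type / Assouad-type arguments, since the rate has the classical nonparametric form). The idea is to construct a finite family of distributions $P_\omega\in\mathcal{D}(H,L,M)$, indexed by $\omega$ in a hypercube $\{0,1\}^m$, that are hard to distinguish from $n$ samples yet are well-separated in the $\bbE[\mathcal{W}_1(\hat F_{n,X},F_X)]$ loss. Concretely, I would partition $[0,1]^k$ into $m\asymp h^{-k}$ cubes of side $h$, fix $X\sim\mathrm{Unif}[0,1]^k$, and on each cube let the conditional law $F_x$ be a small perturbation — of $\mathcal{W}_1$-size $\asymp L h^H$ — of a fixed base distribution, with the sign of the perturbation on cube $j$ governed by $\omega_j$. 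The Hölder constraint (b) forces the perturbation amplitude to scale like $h^H$; constraint (c) is satisfied by choosing base distributions with bounded support. An Assouad-style bound then gives a lower bound of order (number of cubes with undetermined bit) $\times$ (per-cube separation) $\times$ (probability of error per bit), and since each cube receives on average $n/m$ sample points, the per-bit testing error is bounded below as long as $(n/m)\cdot (\text{squared Hellinger/KL per cube}) \lesssim 1$, i.e. $(n/m) h^{2H}\lesssim 1$. Optimizing, one takes $h\asymp n^{-1/(2H+k)}$ again, and the resulting lower bound is $\asymp m\cdot h^{k}\cdot h^H = h^H\asymp n^{-H/(2H+k)}$.

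The main obstacle I anticipate is making the lower-bound construction genuinely live in $\mathcal{W}_1(\bbR)$ and satisfy all three constraints of Definition~\ref{def:class-D} simultaneously while keeping the information-theoretic computations clean: one must choose perturbations of the conditional distributions whose $\mathcal{W}_1$-distance is exactly controlled (constraint (b) with the Hölder exponent $H$), whose moment/dispersion integral stays below $M$ (constraint (c)), and for which the KL or Hellinger divergence between competing hypotheses admits a tractable bound in terms of the perturbation size — so that the chi-square/Hellinger tensorization over the $n$ i.i.d. samples yields the correct $n h^{2H}$ scaling. Perturbing a fixed smooth density by $\pm\varepsilon\, \psi_j(y)$ with $\psi_j$ a localized bump (so that the $\mathcal{W}_1$ effect is $\asymp\varepsilon$ and the KL effect is $\asymp\varepsilon^2$) is the cleanest route; one then needs the bumps to be supported in a common bounded region and the base density bounded away from zero there. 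Handling the slightly delicate bookkeeping of how many hypercube bits are "active" and converting Assouad's lemma output into the stated $\asymp$ is routine once this construction is fixed.
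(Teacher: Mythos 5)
Your upper bound is exactly the paper's: optimize the bandwidth in Corollary~\ref{cor:kernel} (or the number of neighbors in Corollary~\ref{cor:knn} for $k\geq 2$) to get $n^{-H/(2H+k)}$, checking that the residual $(nh_n^k)^{-1}$ term is negligible. Your lower bound, however, takes a genuinely different route. The paper does not run Assouad's machinery itself: it restricts to the subclass of $\mathcal{D}(H,L,M)$ where $Y$ is binary with values in $\{0,B\}$, observes that any estimator $\hat F_{n,x}$ can be projected onto a binary prediction $\tilde F_{n,x}=(1-\tilde p_n(x))\delta_0+\tilde p_n(x)\delta_B$ without increasing the Wasserstein risk, and notes that for binary laws $\mathcal{W}_1(\tilde F_{n,X},F_X)=B|\tilde p_n(X)-p(X)|$. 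The distributional problem thus collapses to estimating the regression function $Bp(x)$ in $L^1$ under the H\"older constraint \eqref{eq:regularity}, for which the lower rate $n^{-H/(2H+k)}$ is quoted from Stone (1980, 1982). Your direct construction is more self-contained (the paper's reduction outsources the hypercube argument to Stone, whose proof is essentially the construction you describe), while the paper's reduction is shorter and sidesteps all the information-theoretic bookkeeping. If you do carry out your construction, the one detail you must not gloss over is condition b) at cube boundaries: a perturbation of constant amplitude $\asymp Lh^H$ on each cube would make $\mathcal{W}_1(F_x,F_{x'})$ jump by $\asymp Lh^H$ across a boundary where $\|x-x'\|$ is arbitrarily small, violating the H\"older condition. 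You need to modulate the perturbation by an $x$-dependent bump vanishing at the cube boundary and itself $H$-H\"older with constant $\lesssim L$; this only costs a constant in the separation and does not change the rate, but it is the step where a careless version of the construction actually fails.
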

Theorem~\ref{thm:minimax} is the counterpart of  \citet[Theorem~1]{PDNT_22} where the minimax rate of convergence for the second order Cram\'er's distance has been considered. The strategy of proof is similar: i) we prove a lower bound by considering a suitable class of binary distributions where the error in Wasserstein distance corresponds to an absolute error in point regression  for which the minimax lower rate of convergence is known; ii) we check that the upper bound for the kernel and/or nearest neighbor algorithm has the same rate of convergence as the lower bound, which proves that the optimal minimax rate of convergence has been identified. In particular, our proof shows that the kernel method defined in Equation~\eqref{eq:kernel-weights} reaches the minimax rate of convergence in any dimension $k\geq 1$ with the choice of bandwidth $h_n\asymp n^{-1/(2H+k)}$; the nearest neighbor method defined in Equation \eqref{eq:nearest-neighbor-weights} reaches the minimax rate of convergence in any dimension $k\geq2$ with the number of neighbors $\kappa_n\asymp n^{H/(H+k/2)}$.

\begin{remark} Our estimate of the minimax rate of convergence holds only for  $d=p=1$ and we briefly discuss what can be expected in other cases. 

When $p=1$ and $d\geq 2$, one may hope to use the strong equivalence between the max-sliced Wasserstein distance and the Wasserstein distance \citep[Theorem 2.3.ii]{BG21}. This requires to estimate the expectation of a supremum over the sphere and this line of research is left for further work. 

When $p>1$, even in dimension $d=1$, it seems difficult to obtain bounds for the Wasserstein distance of order $p$ without very strong assumptions. \cite{BL19} consider the rate of convergence of the empirical distribution $\hat F_n=\frac{1}{n}\sum_{i=1}^n \delta_{Y_i}$ for an i.i.d. sample $Y_1,\ldots,Y_n$ with distribution $F$ on $\bbR$. A first consistency result (Theorem 2.14) states that $\bbE[\mathcal{W}_p^p(\hat F_n,F)]\rightarrow 0$  as soon as $F$ has a finite moment of order $p\geq 1$. Regarding rates of convergence,  they show (Corollary 3.9) that for $p=1$ the standard rate of convergence holds, i.e. $\bbE[\mathcal{W}_1(\hat F_n,F)]=O(1/\sqrt{n})$, if and only if 
\[
J_1(F)=\int_{\bbR} \sqrt{F(z)(1-F(z))}\rmd z<\infty.
\] 
On the other hand, rate of convergences for higher order $p>1$ require the condition 
\[
J_p(F)=\int_{\bbR}\frac{[F(z)(1-F(z))]^{p/2}}{f(z)^{p-1}}\rmd z<\infty,
\]
where $f$ is the density of the absolutely continuous component of $F$. They show (Corollary 5.5) that the standard rate holds, i.e. $\bbE[\mathcal{W}_p^p(\hat F_n,F)]=O(n^{-p/2})$, if and only if $J_p(F)<\infty$. However, this condition is very strong: it does not hold for the Gaussian distribution or for distributions with disconnected support. 
\end{remark}

\subsection{Applications}
We briefly illustrate Theorem~\ref{thm:main} with some applications and examples. In statistics, we commonly face the following generic situation: we are interested in a summary statistic $S$ with real values, e.g. quantiles or tail expectation,  and we want to assess the effect of $X$ on $Y$ through  $S$, that is we want to assess $S_{Y\mid X=x}$. Assuming that $S$ is well-defined for distributions on $\bbR^d$ with a finite moment of order $p\geq 1$,  it can be seen as a map $S:\mathcal{W}_p(\bbR^d)\to\bbR$ and then  $S_{Y\mid X=x}=S(F_x)$ with $F_x$  the conditional distribution of $Y$ given $X=x$. A natural plug-in estimate of $S_{Y\mid X=x}$ is 
\[
\hat S_{n,x}=S(\hat F_{n,x}) \quad \mbox{with $\hat F_{n,x}$ defined by \eqref{eq:def_wed}}.
\]
In this generic situation, our extension of Stone's theorem directly implies the following proposition. Recall that $M_p(\mu)$ is defined in Equation~\eqref{eq:def-Mp}.
\begin{proposition}\label{prop:appli}
Assume $\bbE[\|Y\|^p]<\infty$ and $\mathbb{P}(F_X\in\mathcal{C})=1$ where  ${\mathcal{C}\subset \mathcal{W}_p(\bbR^d)}$ denotes the continuity set of the  statistic  $S:\mathcal{W}_p(\bbR^d)\to\bbR$. Then weak consistency holds, i.e.
\[
\hat S_{n,X}\longrightarrow S_{Y\mid X} \quad \mbox{in probability as $n\to+\infty$.}
\]
If furthermore the statistic $S$ admits a  bound of the form
\begin{equation}\label{eq:linear-bound}
|S(\mu)|\leq aM_p^q(\mu)+b,\quad \mbox{with $a,b\geq 0$ and $0<q\leq p$},
\end{equation}
then consistency holds in $\mathrm{L}^{p/q}$, i.e.
\[
\mathbb{E}\big[|\hat S_{n,X}- S_{Y\mid X}|^{p/q}\big]\longrightarrow 0 \quad \mbox{as $n\to+\infty$}
\]
\end{proposition}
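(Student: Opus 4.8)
The plan is to deduce both claims from the extended Stone's theorem (Theorem~\ref{thm:main}) together with elementary facts about convergence in $\mathcal{W}_p(\bbR^d)$. The key observation is that Theorem~\ref{thm:main} gives $\bbE[\mathcal{W}_p^p(\hat F_{n,X},F_X)]\to 0$, hence $\mathcal{W}_p(\hat F_{n,X},F_X)\to 0$ in $\mathrm{L}^p$ and in particular in probability. So we may pass to a subsequence along which $\mathcal{W}_p(\hat F_{n,X},F_X)\to 0$ almost surely, and on that subsequence reason pathwise.

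For the first (weak consistency) part, I would first note that $\mathcal{W}_p$-convergence implies weak convergence, so on the almost-sure subsequence we have $\hat F_{n,X}\to F_X$ weakly, with $F_X\in\mathcal{C}$ almost surely by hypothesis. Since $\mathcal{C}$ is the continuity set of $S$ (with respect to the $\mathcal{W}_p$-topology, which is the natural topology here), continuity of $S$ at $F_X$ gives $S(\hat F_{n,X})\to S(F_X)$ almost surely along the subsequence, i.e. $\hat S_{n,X}\to S_{Y\mid X}$. A standard subsequence argument (every subsequence of $\mathcal{W}_p(\hat F_{n,X},F_X)$ has a further subsequence converging to $0$ a.s., hence $\hat S_{n,X}\to S_{Y\mid X}$ along it) then upgrades this to convergence in probability for the full sequence. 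One subtlety worth stating carefully: ``continuity set'' should be understood relative to the metric $\mathcal{W}_p$, so that $\hat F_{n,X}\to F_X$ in $\mathcal{W}_p$ and $F_X\in\mathcal{C}$ indeed force $S(\hat F_{n,X})\to S(F_X)$; I would make this explicit so the reader is not misled by the weaker weak-convergence topology.

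For the second ($\mathrm{L}^{p/q}$) part, the growth bound \eqref{eq:linear-bound} gives $|\hat S_{n,X}|^{p/q}\leq (aM_p^q(\hat F_{n,X})+b)^{p/q}\leq C\big(M_p^p(\hat F_{n,X})+1\big)$ for a constant $C=C(a,b,p,q)$, using $q\leq p$ and convexity; similarly for $|S_{Y\mid X}|^{p/q}$. The idea is then a uniform-integrability argument: combine the almost-sure convergence $\hat S_{n,X}\to S_{Y\mid X}$ along the subsequence with the fact that $\{|\hat S_{n,X}-S_{Y\mid X}|^{p/q}\}_n$ is uniformly integrable, which yields $\mathrm{L}^{p/q}$-convergence (again promoted to the full sequence by the subsequence principle). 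Uniform integrability follows once we show $\bbE[M_p^p(\hat F_{n,X})]$ is bounded uniformly in $n$: indeed $M_p^p(\hat F_{n,X})=\sum_i W_{ni}(X)\|Y_i\|^p$, and taking expectations and applying condition i) of Theorem~\ref{thm:stone} with $g(x)=\bbE[\|Y\|^p\mid X=x]$ gives $\bbE[M_p^p(\hat F_{n,X})]\leq C_0\,\bbE[\|Y\|^p]<\infty$ uniformly in $n$. Strictly, one should pass the conditioning through correctly: conditionally on $X_1,\dots,X_n$, the $Y_i$ are independent with $Y_i\sim F_{X_i}$ and the weights are deterministic, so $\bbE[M_p^p(\hat F_{n,X})\mid X_1,\dots,X_n]=\sum_i W_{ni}(X)\,\bbE[\|Y\|^p\mid X=X_i]$, and then condition i) applies after taking the outer expectation.

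The main obstacle is the uniform integrability step in the second part — specifically, controlling $\bbE[M_p^p(\hat F_{n,X})]$ uniformly in $n$ and combining it cleanly with the convergence of $\mathcal{W}_p(\hat F_{n,X},F_X)$ so as to deduce that $M_p^p(\hat F_{n,X})$ converges in $\mathrm{L}^1$ to $M_p^p(F_X)$ (or at least that the family is uniformly integrable); everything else is a routine subsequence/continuity argument. A convenient way to organize this is to recall that $\mathcal{W}_p$-convergence of $\hat F_{n,X}$ to $F_X$ is equivalent to weak convergence together with convergence of $p$-th moments $M_p^p(\hat F_{n,X})\to M_p^p(F_X)$; having $\bbE[M_p^p(\hat F_{n,X})]$ bounded and $M_p^p(\hat F_{n,X})\to M_p^p(F_X)$ a.s. along the subsequence then gives the needed uniform integrability of $\{M_p^p(\hat F_{n,X})\}$, hence of $\{|\hat S_{n,X}-S_{Y\mid X}|^{p/q}\}$ via the growth bound, completing the proof.
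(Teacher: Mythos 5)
Your first part is correct and matches the paper's argument: the paper simply notes that composition with a map continuous at the (a.s.) limit preserves convergence in probability, and your subsequence argument is the standard way to make this precise; your remark that $\mathcal{C}$ must be read as the continuity set for the $\mathcal{W}_p$-metric (not merely the weak topology) is also the right reading of the statement. Your second part has the same architecture as the paper's (growth bound from \eqref{eq:linear-bound} plus uniform integrability of $M_p^p(\hat F_{n,X})$ and integrability of $M_p^p(F_X)$), but the step where you justify uniform integrability contains a genuine gap. You claim that uniform integrability ``follows once we show $\bbE[M_p^p(\hat F_{n,X})]$ is bounded uniformly in $n$'', and later that $L^1$-boundedness combined with a.s.\ convergence along a subsequence yields uniform integrability. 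Neither implication is valid: the sequence $Z_n=n\,\mathds{1}_{[0,1/n]}$ on $[0,1]$ with Lebesgue measure converges a.s.\ to $0$ and satisfies $\bbE[Z_n]=1$ for all $n$, yet is not uniformly integrable. A Scheff\'e-type argument would require $\bbE[M_p^p(\hat F_{n,X})]\to\bbE[M_p^p(F_X)]=\bbE[\|Y\|^p]$, which your bound $\bbE[M_p^p(\hat F_{n,X})]\leq C\,\bbE[\|Y\|^p]$ (with $C$ the constant of condition i), possibly larger than $1$) does not provide.

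The repair is short and is exactly what the paper does (Equation~\eqref{eq:ui} in Step~3 of the proof of Theorem~\ref{thm:main}): apply the scalar Stone Theorem~\ref{thm:stone} to the regression of the integrable response $\|Y\|^p$ on $X$, which gives that $M_p^p(\hat F_{n,X})=\sum_{i=1}^n W_{ni}(X)\|Y_i\|^p$ converges in $L^1$ to $\bbE[\|Y\|^p\mid X]$; an $L^1$-convergent sequence is uniformly integrable. You already have every ingredient for this (the representation of $M_p^p(\hat F_{n,X})$ as a weighted sum and the correct conditioning on $X_1,\ldots,X_n$), so the fix consists of replacing the boundedness claim by this $L^1$-convergence claim. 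With that change your proof is complete and coincides with the paper's.
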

 
\begin{example} (quantile). For a distribution $G$ on $\mathbb{R}$, we define the associated quantile function 
\[
G^{-1}(\alpha)=\inf\{z\in\mathbb{R}: G(z)\geq \alpha\},\quad 0<\alpha<1.
\]
It is well-known that the weak convergence $G_n\stackrel{d}\to G$ implies the quantile convergence $G_n^{-1}(\alpha)\to G^{-1}(\alpha)$ at each continuity point $\alpha$ of $G^{-1}$.
Equivalently, considering $\mathcal{P}(\mathbb{R})$ endowed with the weak convergence topology, the $\alpha$-quantile statistic $S_\alpha(G)=G^{-1}(\alpha)$   is continuous at $G$ as soon as $G^{-1}$ is continuous at $\alpha$. 

In view of this, we let $\mathcal{C}=\{G\in\mathcal{P}(\mathbb{R})\colon G^{-1} \mbox{ continuous on $(0,1)$}\}$ and assume that the conditional distribution satisfies $\mathbb{P}(F_X\in\mathcal{C})=1$. Then weak convergence holds for the conditional quantiles, i.e.
\[
\hat F_{n,X}^{-1}(\alpha)\to F_X^{-1}(\alpha)\quad \mbox{in probability}.
\]
Note that no integrability condition is needed here because we can apply Proposition~\ref{prop:appli} on the transformed data $(X_i,\tilde Y_i)_{1\leq i\leq n}$,  where $\tilde Y_i=\mathrm{tan}^{-1}(Y_i)$ is bounded so that convergence in Wasserstein distance is equivalent to weak convergence. If furthermore $Y$ is $p$-integrable,  then the bound
\begin{align*}
|S_\alpha(G)|^p&\leq \frac{1}{\alpha}\int_0^\alpha |G^{-1}(u)|^p \rmd u + \frac{1}{1-\alpha}\int_\alpha^1 |G^{-1}(u)|^p \rmd u \\
&\leq \Big(\frac{1}{\alpha}+\frac{1}{1-\alpha}\Big) M_p^p(G)
\end{align*}
implies the strengthened convergence
\[
\hat F_{n,X}^{-1}(\alpha)\to F_X^{-1}(\alpha)\quad \mbox{in $\mathrm{L}^p$}.
\]
\end{example}

\begin{example} (tail expectation) The tail expectation above level $\alpha\in (0,1)$ is the risk measure defined for $G\in\mathcal{W}_1(\bbR)$ by
\[
S_\alpha(G)=\frac{1}{1-\alpha}\int_\alpha^1 G^{-1}(u)\,\rmd u.
\]
The name  comes from the equivalent definition
\[
S_{\alpha}(G)=\bbE[Y\mid Y>G^{-1}(\alpha)],\quad Y\sim G, 
\]
which holds when $G^{-1}$ is continuous at $\alpha$. One can see that 
\begin{align*}
|S_{\alpha}(G_1)-S_{\alpha}(G_2)|&\leq \frac{1}{1-\alpha}\int_\alpha^1 |G^{-1}_1(u)-G^{-1}_1(u)|\,\rmd u\\
&\leq \frac{1}{1-\alpha}\int_0^1 |G^{-1}_1(u)-G^{-1}_2(u)|\,\rmd u\\
&= \frac{1}{1-\alpha}\mathcal{W}_1(G_1,G_2).
\end{align*}
so that $S_\alpha$ is Lipschitz continuous with respect to the Wasserstein distance $\mathcal{W}_1$. 
As a consequence, the conditional tail expectation $S_\alpha(F_x)$ can be estimated in a consistent way by the plug-in estimator $S_\alpha(\hat F_{n,x})$ since
\[
\bbE[|S_\alpha(\hat F_{n,X})-S_\alpha(F_X)|]\leq \frac{1}{1-\alpha}\bbE[\mathcal{W}_1(\hat F_{n,X},F_X)]\longrightarrow 0.
\]
\end{example}

\begin{example} (probability weighted moment) A similar result holds for the probability weighted moment of order $p,q>0$ defined by
\[
S_{p,q}(G)=\int_0^1 G^{-1}(u) u^p(1-u)^q\,\rmd u,\quad G\in\mathcal{W}_1(\bbR).
\]
(\cite{prob_weig_mome}). The name  comes from the equivalent definition
\[
S(G)=\bbE[YG(Y)^p(1-G(Y))^q],\quad Y\sim G, 
\]
which holds when $G^{-1}$ is continuous on $(0,1)$. One can again check that the statistic $S_{p,q}$ is Lipschitz continuous with respect to the Wasserstein distance $\mathcal{W}_1$ since
\begin{align*}
|S_{p,q}(G_1)-S_{p,q}(G_2)|&\leq \int_0^1 |G^{-1}_1(u)-G^{-1}_2(u)|u^p(1-u)^q\,\rmd u\\
&\leq \max_{0\leq u\leq 1}u^p(1-u)^q  \times \int_0^1 |G^{-1}_1(u)-G^{-1}_2(u)|\,\rmd u\\
&= \Big(\frac{p}{p+q}\Big)^p\Big(\frac{q}{p+q}\Big)^q\mathcal{W}_1(G_1,G_2).
\end{align*}
\end{example}

\begin{example} (covariance) We conclude with a simple example in dimension $d=2$ where the statistic of interest is the covariance between the two components of $Y=(Y_1,Y_2)$ given $X=x$.
Here, we consider  
\[
S(G)=\int_{\bbR^2}y_1y_2\, \rmd G-\int_{\bbR^2}y_1\, \rmd G \int_{\bbR^2}y_2\, \rmd G,\quad G\in\mathcal{W}_2(\bbR^2).
\]
Considering square integrable random vectors $Y=(Y_1,Y_2)$ and $Z=(Z_1,Z_2)$ with distribution $G$ and $H$ respectively, we compute
\begin{align*}
&|S(G)-S(H)|\\
&=\big|\mathrm{Cov}(Y_1,Y_2)-\mathrm{Cov}(Z_1,Z_2)\big|\\
&=\big|\mathrm{Cov}(Y_1,Y_2-Z_2)-\mathrm{Cov}(Z_1-Y_1,Z_2)\big|\\
&\leq \mathrm{Var}(Y_1)^{1/2}\mathrm{Var}(Y_2-Z_2)^{1/2}+\mathrm{Var}(Z_2)^{1/2}\mathrm{Var}(Z_1-Y_1)^{1/2}
\end{align*}
were the last line is a consequence of Cauchy-Schwartz inequality. We have the upper bounds
\[
\mathrm{Var}(Y_1)^{1/2}\leq M_2(G),\quad \mathrm{Var}(Z_2)^{1/2}\leq M_2(H)
\]
and, choosing an optimal coupling $(Y,Z)$ between $G$ and $H$,
\[
\mathrm{Var}(Z_1-Y_1)^{1/2}\leq \|Y-Z\|_{L^2}=\mathcal{W}_2(G,H),\quad \mathrm{Var}(Y_2-Z_2)^{1/2}\leq\mathcal{W}_2(G,H).
\]
Altogether, we obtain,
\[
|S(G)-S(H)|\leq \big(M_2(G)+M_2(H)\big) \mathcal{W}_2(G,H).
\]
This proves that $S$ is locally Lipschitz and hence continuous  with respect to the distance $\mathcal{W}_2$. Taking $H=\delta_0$, we obtain 
\[
|S(G)|\leq M_2(G)^2
\]
and the  bound \eqref{eq:linear-bound} holds with $q=2$. Thus Proposition~\ref{prop:appli} implies that the plug-in estimator
\[
S(\hat F_{n,x}) = \sum_{i=1}^n W_{ni}(x)Y_{1i}Y_{2i}-\sum_{i=1}^n W_{ni}(x)Y_{1i}\sum_{i=1}^n W_{ni}(x)Y_{2i}
\]
is consistent in  absolute mean  for the conditional covariance
\[
S(F_{x}) = \bbE(Y_1Y_2\mid X=x)-\bbE(Y_1\mid X=x)\bbE(Y_2\mid X=x),
\]
i.e. $\bbE[|S(\hat F_{n,X})-S(F_X) |]\longrightarrow 0$ as $n\to+\infty$.
\end{example}

\section{Proofs}\label{sec:proofs}
\subsection{Proof of Theorem~\ref{thm:main}}
\begin{proof}[Proof of Theorem~\ref{thm:main} - case $d=1$]
We first consider the case when  $Y$ is uniformly bounded and takes its values in $[-M,M]$ for some $M>0$. Then, it holds
\[
F_x(z)= \begin{cases} 0& \mbox{if } z<-M \\ 1& \mbox{if } z\geq M \end{cases}\quad \mbox{and} \quad \hat F_{n,x}(z)= \begin{cases} 0& \mbox{if } z<-M \\ 1& \mbox{if } z\geq M \end{cases}.
\]
and the generalized inverse functions (quantile functions) are bounded in absolute value by $M$.
As a consequence, 
\begin{align}
\bbE\left[\mathcal{W}_p^p(\hat F_{n,X},F_X)\right]&=
\bbE\left[\int_{0}^1 |\hat F_{n,X}^{-1}(u)-F^{-1}_X(u)|^p \rmd z \right]\nonumber\\
&\leq (2M)^{p-1} \bbE\left[\int_{0}^1 |\hat F_{n,X}^{-1}(u)-F^{-1}_X(u)| \rmd u \right]\nonumber\\
&= (2M)^{p-1}\int_{-M}^M \bbE\left[|\hat F_{n,X}(z)-F_X(z)|\right] \rmd z.\label{eq:proof1}
\end{align}
In this lines, we have used  Equations~\eqref{eq:wasserstein1} and \eqref{eq:wasserstein2} together with Fubini's theorem. 

Consider the regression model $(X,\mathds{1}_{\{Y\leq z\}})\in\bbR^d\times \bbR$ where  $z\in[-M,M]$ is fixed. The corresponding regression function is 
\[
x\mapsto \bbE[\mathds{1}_{\{Y\leq z\}}|X=x]=F_x(z)
\]
and the local weight  estimator associated with the sample $(X_i,\mathds{1}_{\{Y_i\leq z\}})$, $1\leq i\leq n$ is 
\[
x\mapsto \sum_{i=1}^n W_{ni}(x)\mathds{1}_{\{Y_i\leq z\}}=\hat F_{n,x}(z).
\]
An application of Stone's theorem with $p=1$ yields
\[
\bbE\left[|\hat F_{n,X}(z)-F_X(z)|\right]\longrightarrow 0,\quad \mbox{as $n\to+\infty$},
\]
whence we deduce, by the dominated convergence theorem,
\[
\int_{-M}^M \bbE\left[|\hat F_{n,X}(z)-F_X(z)|\right]\rmd z\longrightarrow 0.
\]
The upper bound~\eqref{eq:proof1} finally implies
\[
\bbE\left[\mathcal{W}_p^p(\hat F_{n,X},F_X)\right]\longrightarrow 0.
\]

\medskip
We next consider the general case when $Y$ is not necessarily bounded. For $M>0$, we define the truncation $Y^M$ of $Y$ by
\[
Y^M= \begin{cases} -M& \mbox{if } Y<-M \\ Y& \mbox{if } -M\leq Y< M \\ M& \mbox{if } Y\geq M \end{cases}.
\]
We define similarly $Y_1^M,\ldots,Y_n^M$ the truncations of $Y_1,\ldots,Y_n$ respectively. The conditional distribution associated with $Y^M$ is
\[
F_x^{M}(z)=\mathbb{P}(Y^M\leq z|X=x)=\begin{cases} 0& \mbox{if } z<-M \\ F_x(z)& \mbox{if } -M\leq Y< M \\ 1& \mbox{if } z\geq M \end{cases}.
\]
The local weight estimation built on the truncated sample is
\[
\hat F_{n,x}^{M}(z)=\sum_{i=1}^n W_{ni}(x)\mathds{1}_{\{Y_i^M\leq z\}}.
\]
By the triangle inequality,
\[
\mathcal{W}_p(\hat F_{n,x},F_x)\leq \mathcal{W}_p(\hat F_{n,x},\hat F_{n,x}^{M})+\mathcal{W}_p(\hat F_{n,x}^{M},F_x^{M})+\mathcal{W}_p(F_x^{M},F_x),
\]
whence we deduce
\begin{align*}
& \bbE[\mathcal{W}_p^p(\hat F_{n,x},F_x)]\\
\leq & \; 3^{p-1}\left(\bbE[\mathcal{W}_p^p(\hat F_{n,X},\hat F_{n,X}^{M})]+\bbE[\mathcal{W}_p(\hat F_{n,X}^{M},F_X^{M})]+\bbE[\mathcal{W}_p^p(F_X^{M},F_X)] \right).
\end{align*}
By the preceding result in the bounded case, for any fixed $M$, the second term converge to $0$ as $n\to +\infty$. We next focus on the first and third term.

For fixed $X=x$, there is a natural coupling between the distribution $\hat F_{n,x}$ and $\hat F_{n,x}^{M}$ given by $(Z_1,Z_2)$ such that
\[
(Z_1,Z_2)=(Y_i,Y_i^M)\quad \mbox{with probability $W_{ni}(x)$}.
\]
Clearly $Z_1\sim \hat F_{n,x}$ and $Z_2\sim \hat F_{n,x}^{M}$ and this coupling provides the upper bound 
\begin{equation}\label{eq:coupling-bound}
\mathcal{W}_p^p(\hat F_{n,x},\hat F_{n,x}^{M})\leq \|Z_1-Z_2\|_{\mathrm{L}^p}^p=\sum_{i=1}^n W_{ni}(x)|Y_i-Y_i^M|^p .
\end{equation}
Let us introduce the function $g_M(x)$ defined by
\[
g_M(x)=\bbE\left[|Y-Y^M|^p\mid X=x\right].
\]
Using the fact that, conditionally on $X_1,\ldots,X_n$, the random variables $Y_1,\ldots,Y_n$ are independent with distribution $F_{X_1},\ldots,F_{X_n}$, we deduce
\[
\bbE\left[\mathcal{W}_p^p(\hat F_{n,x},\hat F_{n,x}^{M})\right]\leq \bbE\left[\sum_{i=1}^n W_{ni}(x)g_M(X_i) \right].
\]
The condition $i)$ on the weights in Stone's Theorem then implies
\[
\bbE\left[\sum_{i=1}^n W_{ni}(X)g_M(X_i) \right]\leq C\bbE[g_M(X)].
\]
Because $|Y-Y^M|^p$ converges almost surely to $0$ as $M\to+\infty$ and is bounded by $2^p|Y|^p$ which is integrable, Lebesgue's convergence theorem implies
\[
\bbE[g_M(X)]=\bbE\left[|Y-Y^M|^p\right]\longrightarrow 0 \quad \mbox{as $M\to+\infty$}.
\]
We deduce that the first term satisfies
\[
\bbE\left[\mathcal{W}_p^p(\hat F_{n,X},\hat F_{n,X}^{M})\right]\leq C\bbE[g_M(X)]\longrightarrow 0, \quad \mbox{as $M\to +\infty$}
\]
where the convergence is uniform in $n$. 

We now consider the third term. Since $Y^M$ is obtained from $Y$ by truncation, the distribution functions and quantile functions of $Y$ and $Y^M$ are related by
\[
F_x^{M}(z)=\begin{cases} 0& \mbox{if } z<-M \\ F_x(z)& \mbox{if } -M\leq z< M \\ 1& \mbox{if } z\geq M \end{cases}
\]
and
\[
(F_x^M)^{-1}(u)=\begin{cases} -M & \mbox{if } F_x^{-1}(u)<-M \\ (F_x)^{-1}(u)& \mbox{if } -M\leq F_x^{-1}(u)< M \\ M& \mbox{if } F_x^{-1}(u)\geq M \end{cases}.
\]
As a consequence
\begin{align*}
\mathcal{W}_p^p(F_x^{M},F_x)&=\int_{0}^1|(F_x^{M})^{-1}(u)-F_x^{-1}(u)|^p\rmd u\\ &=\bbE\left[|Y^M-Y|^p\mid X=x\right]=g_M(x).
\end{align*}
We deduce
\[
\bbE\left[\mathcal{W}_p^p(F_X^{M},F_X)\right]= \bbE[g_M(X)]\longrightarrow 0, \quad \mbox{as $M\to +\infty$}
\]
where the convergence is uniform in $n$. 

We finally combine the three terms. The sum can be made smaller than any $\varepsilon>0$ by first choosing $M$ large enough so that the first and third terms are smaller than $\varepsilon/3$ and then choosing $n$ large enough so that the second term is smaller than $\varepsilon/3$. This proves Equation~\eqref{eq:stone-disreg} and concludes the proof.
\end{proof}

In order to extend the proof from $d=1$ to $d\geq 2$, we need the notion of \textit{sliced Wasserstein distance}, see \cite{BG21} for instance. Let $\mathbb{S}^{d-1}=\{u\in\bbR^d:\|u\|=1\}$ be the unit sphere in $\bbR^d$ and, for $u\in\bbR^d$, let $u_*:\bbR^d\to\bbR$ be the linear form defined by $u_*(x)=u\cdot x$. The projection in direction $u$ of a measure $\mu$ on $\bbR^d$ is defined as the pushforward  $\mu\circ u_*^{-1}$ which is a measure on $\bbR$. The inequality $|u\cdot x|\leq \|x\|$ implies that $\mu\circ u_*^{-1}\in \mathcal{W}_p(\bbR)$ for all $\mu\in \mathcal{W}_p(\bbR^d)$ and $u\in\mathbb{S}^{d-1}$. The sliced and max-sliced Wasserstein distances between  $\mu,\nu\in \mathcal{W}_p(\bbR^d)$ are then defined  respectively by
\[
S\mathcal{W}_p(\mu,\nu)=\left(\int_{\mathbb{S}^{d-1}} \mathcal{W}_p^p(\mu\circ u_*^{-1},\nu\circ u_*^{-1})\,\sigma(\rmd u)\right)^{1/p},
\]
where $\sigma$ denotes the uniform measure on $\mathbb{S}^{d-1}$ and
\[
\overline{SW}_p(\mu,\nu)=\max_{u\in\mathbb{S}^{d-1}} \mathcal{W}_p(\mu\circ u_*^{-1},\nu\circ u_*^{-1}).
\]
In plain words, the sliced and max-sliced Wasserstein distance are respectively the average and the maximum over all the $1$-dimensional Wasserstein distances between the projections of $\mu$ and $\nu$. The following result  is crucial in our proof.

\begin{theorem}[\cite{BG21}]\label{thm:BG21}
For all $p\geq 1$, $S\mathcal{W}_p$ and $\overline{SW}_p$ are distances on $\mathcal{W}_p(\bbR^d)$ which are equivalent to $\mathcal{W}_p$, i.e. for all sequence $\mu,\mu_1,\mu_2,\ldots \in \mathcal{W}_p(\bbR^d)$
\[
S\mathcal{W}_p(\mu_n,\mu)\to 0\quad \Longleftrightarrow\quad \overline{SW}_p(\mu_n,\mu)\to 0\quad \Longleftrightarrow\quad \mathcal{W}_p(\mu_n,\mu)\to 0.
\]
\end{theorem}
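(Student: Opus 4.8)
The plan is to establish the chain $\mathcal{W}_p(\mu_n,\mu)\to 0 \Rightarrow \overline{SW}_p(\mu_n,\mu)\to 0 \Rightarrow S\mathcal{W}_p(\mu_n,\mu)\to 0 \Rightarrow \mathcal{W}_p(\mu_n,\mu)\to 0$, together with the verification that $S\mathcal{W}_p$ and $\overline{SW}_p$ are genuine metrics. The first two implications are immediate: since the linear form $u_*$ is $1$-Lipschitz, pushing an optimal coupling of $\mu,\nu$ forward by $u_*\times u_*$ produces a coupling of the projections, so $\mathcal{W}_p(\mu\circ u_*^{-1},\nu\circ u_*^{-1})\leq\mathcal{W}_p(\mu,\nu)$ for every $u\in\mathbb{S}^{d-1}$; taking the maximum gives $\overline{SW}_p\leq\mathcal{W}_p$, and $S\mathcal{W}_p\leq\overline{SW}_p$ since an average is at most a maximum. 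The metric axioms for $\overline{SW}_p$ follow from those of $\mathcal{W}_p$ on $\bbR$ by taking maxima of the one-dimensional triangle inequalities, and for $S\mathcal{W}_p$ from Minkowski's inequality in $L^p(\mathbb{S}^{d-1},\sigma)$; in both cases separation reduces to the Cram\'er--Wold device, namely that $\mu\circ u_*^{-1}=\nu\circ u_*^{-1}$ for $\sigma$-a.e.\ $u$ forces $\mu=\nu$ (a full-$\sigma$-measure subset of the sphere is dense and characteristic functions are continuous).

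The content is the implication $S\mathcal{W}_p(\mu_n,\mu)\to 0\Rightarrow\mathcal{W}_p(\mu_n,\mu)\to 0$, which I would prove through the standard characterization (\cite{Villani_2009}, Chapter~6): $\mathcal{W}_p(\mu_n,\mu)\to 0$ if and only if $\mu_n\to\mu$ weakly and $M_p(\mu_n)\to M_p(\mu)$. The first ingredient is a moment bound obtained by averaging over directions. By Fubini and rotational invariance of $\sigma$, $\int_{\mathbb{S}^{d-1}}|u\cdot x|^p\,\sigma(\rmd u)=c_{p,d}\|x\|^p$ with $c_{p,d}=\int_{\mathbb{S}^{d-1}}|u\cdot e_1|^p\,\sigma(\rmd u)>0$, hence $\int_{\mathbb{S}^{d-1}}M_p^p(\mu_n\circ u_*^{-1})\,\sigma(\rmd u)=c_{p,d}M_p^p(\mu_n)$. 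Since $M_p(\rho)=\mathcal{W}_p(\rho,\delta_0)$, the reverse triangle inequality gives $M_p(\mu_n\circ u_*^{-1})\leq M_p(\mu\circ u_*^{-1})+\mathcal{W}_p(\mu_n\circ u_*^{-1},\mu\circ u_*^{-1})$, and Minkowski in $L^p(\sigma)$ yields $c_{p,d}^{1/p}M_p(\mu_n)\leq c_{p,d}^{1/p}M_p(\mu)+S\mathcal{W}_p(\mu_n,\mu)$. Hence $\sup_n M_p(\mu_n)<\infty$, so $\{\mu_n\}$ is tight by Markov's inequality, and moreover $\limsup_n M_p(\mu_n)\leq M_p(\mu)$.

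It remains to identify the limits. By tightness it suffices to show that any weakly convergent subsequence has limit $\mu$. If $\mu_{n_k}\to\nu$ weakly then $\mu_{n_k}\circ u_*^{-1}\to\nu\circ u_*^{-1}$ weakly for every $u$; on the other hand $\int_{\mathbb{S}^{d-1}}\mathcal{W}_p^p(\mu_{n_k}\circ u_*^{-1},\mu\circ u_*^{-1})\,\sigma(\rmd u)\to 0$, so along a further subsequence $\mathcal{W}_p(\mu_{n_{k_\ell}}\circ u_*^{-1},\mu\circ u_*^{-1})\to 0$, hence $\mu_{n_{k_\ell}}\circ u_*^{-1}\to\mu\circ u_*^{-1}$ weakly, for $\sigma$-a.e.\ $u$. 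Therefore $\nu\circ u_*^{-1}=\mu\circ u_*^{-1}$ for $\sigma$-a.e.\ $u$, so $\nu=\mu$ by Cram\'er--Wold, which proves $\mu_n\to\mu$ weakly. Combining this with $\limsup_n M_p(\mu_n)\leq M_p(\mu)$ and the lower semicontinuity $M_p(\mu)\leq\liminf_n M_p(\mu_n)$ gives $M_p(\mu_n)\to M_p(\mu)$, and the characterization above yields $\mathcal{W}_p(\mu_n,\mu)\to 0$, closing the loop. I expect the main obstacle to be exactly this last implication: the ``$\sigma$-almost every direction'' bookkeeping forces repeated subsequence extractions, and the genuinely delicate point is upgrading weak convergence of the one-dimensional projections to $\mathcal{W}_p$-convergence, which is precisely where the global moment control coming from the averaging identity is indispensable.
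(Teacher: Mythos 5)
The paper does not prove this statement at all: Theorem~\ref{thm:BG21} is imported verbatim from \cite{BG21} and used as a black box, so there is no in-paper argument to compare yours against. Judged on its own, your proof is correct and complete. The easy implications ($\mathcal{W}_p\Rightarrow\overline{SW}_p\Rightarrow S\mathcal{W}_p$, via the $1$-Lipschitz pushforward and average-versus-maximum) and the metric axioms (Minkowski in $L^p(\sigma)$, Cram\'er--Wold for separation) are handled exactly as one should. The substantive implication $S\mathcal{W}_p\to 0\Rightarrow\mathcal{W}_p\to 0$ is routed through the standard characterization of $\mathcal{W}_p$-convergence as weak convergence plus convergence of $p$-th moments (\citealp{Villani_2009}, Theorem~6.9), and each ingredient is sound: the rotational-invariance identity $\int_{\mathbb{S}^{d-1}}|u\cdot x|^p\,\sigma(\rmd u)=c_{p,d}\|x\|^p$ combined with Minkowski gives both tightness and the crucial one-sided bound $\limsup_n M_p(\mu_n)\leq M_p(\mu)$; the subsequence-of-a-subsequence extraction to get $\sigma$-a.e.\ convergence of the projected distances is legitimate ($L^1(\sigma)$ convergence of $\mathcal{W}_p^p(\mu_{n_k}\circ u_*^{-1},\mu\circ u_*^{-1})$ yields a.e.\ convergence along a further subsequence); and pairing $\limsup_n M_p(\mu_n)\leq M_p(\mu)$ with the lower semicontinuity of $M_p$ under weak convergence closes the moment condition. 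This is essentially the argument of \cite{BG21} (their Theorem~2.1), so you have in effect reconstructed the cited proof rather than found a new route; the one presentational gain of your write-up is that the single Minkowski inequality simultaneously delivers tightness and the upper moment bound, which makes the final step short.
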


\begin{proof}[Proof of Theorem~\ref{thm:main} - case $d\geq 2$.]
For the sake of clarity, we divide the proof into three steps:
\begin{enumerate}
\item[1)] we  prove that the result holds in max-sliced Wasserstein distance, i.e. $\bbE[\overline{SW}_p^p(\hat F_{n,X},F_X)]\to 0$;
\item[2)] we deduce that $\mathcal{W}_p(\hat F_{n,X},F_X)\to 0$ in probability;
\item[3)] we show that the sequence $\mathcal{W}_p^p(\hat F_{n,X},F_X)$ is uniformly integrable.
\end{enumerate}
Points 2) and 3) together imply  $\bbE[\mathcal{W}_p^p(\hat F_{n,X},F_X)]\to 0$ as required.

\medskip
Step 1). For all $u\in\mathbb{S}^{d-1}$, the projection $\hat F_{n,X}\circ u_*^{-1}$ is the weighted empirical distribution
\[
\hat F_{n,X}\circ u_*^{-1}=\sum_{i=1}^nW_{ni}(X)\delta_{Y_i\cdot u}.
\]
An application of Theorem~\ref{thm:main} to the $1$-dimensional sample $(Y_i\cdot u)_{i\geq 1}$ yields
\begin{equation}\label{eq:control_fixed_u}
\bbE[\mathcal{W}_p^p(\hat F_{n,X}\circ u_*^{-1},F_X\circ u_*^{-1})]\longrightarrow 0.
\end{equation}
Note indeed that $\bbE[|Y|^p]<\infty$ implies $\bbE[|Y\cdot u|^p]<\infty$ and that the conditional laws of $Y\cdot u$ are the pushforward of those of $Y$, i.e. $\mathcal{L}(Y\cdot u\mid X)= F_X\circ u_*^{-1}$.

We next consider the max-sliced Wasserstein distance. Regularity in the direction $u\in\mathbb{S}^{d-1}$ will be useful and we recall that the Wasserstein distance between projections depends on the direction in a Lipschitz way. More precisely, according to \citet[Proposition 2.2]{BG21}, 
\[
|\mathcal{W}_p(\mu\circ u_*^{-1},\nu\circ u_*^{-1}) -\mathcal{W}_p(\mu\circ v_*^{-1},\nu\circ v_*^{-1})| \leq (M_p(\mu)+M_p(\nu))\|u-v\|,
\]
for all $\mu,\nu\in \mathcal{W}_p(\bbR^d)$ and $u,v\in\mathbb{S}^{d-1}$ (recall Equation~\eqref{eq:def-Mp} for the definition of $M_p(\mu)$, $M_p(\nu)$).

The sphere $\mathbb{S}^{d-1}$ being compact, for all $\varepsilon>0$, one can find $K\geq 1$ and $u_1,\ldots,u_K\in\mathbb{S}^{d-1}$ such that the balls $B(u_i,\varepsilon)$ with centers $u_i$ and radius $\varepsilon$ cover the sphere. Then, due to the Lipschitz property, the max-sliced Wasserstein distance is controlled by
\begin{align*}
&\overline{SW}_p(\hat F_{n,X},F_X)\\
&=\max_{u\in\mathbb{S}^{d-1}}\mathcal{W}_p^p(\hat F_{n,X}\circ u_*^{-1},F_X\circ u^{-1})\\
&\leq \max_{1\leq k\leq K}\mathcal{W}_p(\hat F_{n,X}\circ u_{k*}^{-1},F_X\circ u_{k*}^{-1})+\varepsilon(M_p(\hat F_{n,X})+M_p(F_{X})).
\end{align*}
Elevating to the $p$-th power and taking the expectation, we deduce
\begin{align*}
&\bbE\big[\overline{SW}_p^p(\hat F_{n,X},F_X)\big] \\
&\leq 3^{p-1} \bbE\big[\max_{1\leq k\leq K}\mathcal{W}_p^p(\hat F_{n,X}\circ u_{k*}^{-1},F_X\circ u_{k*}^{-1})\big]  +3^{p-1}\varepsilon^p (\bbE\big[M_p^p(\hat F_{n,X})\big] +\bbE\big[M_p^p(F_{X})\big]).
\end{align*}
 The first term converges to $0$ thanks to Eq.~\eqref{eq:control_fixed_u}, i.e. 
 \[
 \bbE[\max_{1\leq i\leq K}\mathcal{W}_p^p(\hat F_{n,X}\circ u_{i*}^{-1},F_X\circ u_{i*}^{-1})]\longrightarrow 0.
 \]
The second  term is controlled by a constant times $\varepsilon^p$ since
\[
\bbE[M_p^p(\hat F_{n,X})]=\bbE\big[\sum_{i=1}^n W_{ni}(X)\|Y_i\|^p\big]\leq C\bbE[\|Y\|^p]
\]
(by property $i)$ of the weights) and 
\[
\bbE[M_p^p(F_{X})]=\bbE\big[\bbE[\|Y\|^p\mid X]\big]=\bbE[\|Y\|^p]
\]
(by the tower property of conditional expectation). Letting $\varepsilon\to 0$, the second term can be made arbitrarily small. We deduce $\bbE[\overline{SW}_p^p(\hat F_{n,X},F_X)]\to 0$.

\medskip
Step 2). As a consequence of step 1), $\overline{SW}_p(\hat F_{n,X},F_X)\to 0$
in probability, or equivalently $\hat F_{n,X}\to F_X$ in probability in the
 metric space $(\mathcal{W}_p(\bbR^d),\overline{SW}_p)$. Theorem~\ref{thm:BG21} implies
  that the identity mapping is continuous from $(\mathcal{W}_p(\bbR^d),\overline{SW}_p)$
   into $(\mathcal{W}_p(\bbR^d),\mathcal{W}_p)$. The continuous mapping theorem  implies that $\hat F_{n,X}\to F_X$ in probability
   in the metric space $(\mathcal{W}_p(\bbR^d),\mathcal{W}_p)$. Equivalently, $\mathcal{W}_p(\hat F_{n,X},F_X)\to 0$ in probability.

\medskip
Step 3). By the triangle inequality,
\[
\mathcal{W}_p(\hat F_{n,X},F_X)\leq \mathcal{W}_p(\hat F_{n,X},\delta_0)+\mathcal{W}_p(\delta_0,F_X) 
\]
with $\delta_0$ the Dirac mass at $0$. Furthermore, for any $\mu\in \mathcal{W}_p(\bbR^d)$,
\[
\mathcal{W}_p(\mu,\delta_0)=\left(\int_{\bbR^d}\|x\|^p\,\mu(\rmd x)\right)^{1/p}=M_p(\mu).
\]
We deduce
\[
\mathcal{W}_p^p(\hat F_{n,X},F_X)\leq 2^{p-1}M_p^p(\hat F_{n,X})+2^{p-1} M_p^p(F_X).
\]
In order to prove the uniform integrability of the left hand side, it is enough to prove that 
\begin{equation}\label{eq:ui}
\mbox{$M_p^p(F_X)$ is integrable and $M_p^p(\hat F_{n,X})$, $n\geq 1$, is uniformly integrable}.
\end{equation}
We have
\[
M_p^p(F_X)=\bbE[\|Y\|^p\mid X]
\]
which is integrable because $\bbE[\|Y\|^p]<\infty$. Furthermore, 
\[
M_p^p(\hat F_{n,X})=\sum_{i=1}^n W_{ni}(X)\|Y_i\|^p
\]
and Stone's Theorem ensures that 
\[
\sum_{i=1}^n W_{ni}(X)\|Y_i\|^p\longrightarrow \bbE[\|Y\|^p\mid X] \quad \mbox{in $L^1$}. 
\]
Since the sequence $M_p^p(\hat F_{n,X})$ converges in $L^1$, it is uniformly integrable and the claim follows.
\end{proof}

\subsection{Proof of Proposition~\ref{prop:upperbound}, Corollaries~\ref{cor:kernel}-\ref{cor:knn} and Theorem~\ref{thm:minimax}}

    \begin{proof}[Proof of Proposition~\ref{prop:upperbound}]
       The proof of the upper bound relies on  a coupling argument. Without loss of generality, we can assume that the $Y_i$'s are generated from  uniform random variables $U_i$'s  by the inversion method -- i.e.  we assume that $U_i$, $1\leq i\leq n$, are independent identically distributed random variables with uniform distribution on $(0,1)$ that are furthermore independent from the covariates $X_i$, $1\leq i\leq n$ and we set $Y_i=F^{ -1}_{X_i}(U_i)$. Then the sample $(X_i,Y_i)$ is i.i.d. with distribution $P$. In order to compare $\hat F_{n,x}$ and $F_x$, we introduce the random variables $\tilde Y_i=F^{-1}_{x}(U_i)$ and we define
    \[
    \tilde F_{n,x}(z)=\sum_{i=1}^n W_{ni}(x)\mathds{1}_{\{\tilde Y_i\leq z\}}.
    \]
    By the triangle inequality,
  	\[
	\mathcal{W}_1(\hat F_{n,x}, F_x)\leq 	\mathcal{W}_1(\hat F_{n,x}, \tilde F_{n,x})+	\mathcal{W}_1(\tilde F_{n,x}, F_x).
	\]
    In the right hand side, the first term is interpreted as an \textit{approximation error} comparing the weighted sample $(Y_i,W_{ni}(x))$ to $(\tilde Y_i,W_{ni}(x))$ where the $\tilde Y_i$ have the target distribution $F_x$. The second term is an \textit{estimation error} where we use the weighted sample $(\tilde Y_i,W_{ni}(x))$ with the correct distribution to estimate $F_x$.
    
    We first consider the approximation error. A similar argument as for the proof of Equation~\eqref{eq:coupling-bound} implies
    \[
    \mathcal{W}_1(\hat F_{n,x}, \tilde F_{n,x})\leq \sum_{i=1}^n W_{ni}(x)|Y_i-\tilde Y_i|.
    \]
    Introducing the uniform random variables $U_i$'s, we get
    \begin{align*}
	\bbE[ \mathcal{W}_1(\hat F_{n,x}, \tilde F_{n,x})]& \leq \bbE\Big[ \sum_{i=1}^n W_{ni}(x)|F^{-1}_{X_i}(U_i)-F^{-1}_{x}(U_i) |\Big] \\
 &=\bbE\Big[ \sum_{i=1}^n W_{ni}(x)\displaystyle\int_0^1|F^{-1}_{X_i}(u)-F^{-1}_{x}(u)|~\rmd u \Big] \quad \text{by independence}\\
	&= \bbE \Big[  \sum_{i=1}^n W_{ni}(x)\mathcal{W}_1(F_{X_i},F_{x})\Big],
	\end{align*}
    where the equality relies on Equation~\eqref{eq:wasserstein1}. Note that this control of the approximation error is very general and could be extended to the Wasserstein distance of order $p>1$.
    
    We next consider the estimation error and our approach works for $p=1$ only. By Equation~\eqref{eq:wasserstein2},
    \[
	\bbE[\mathcal{W}_1(\tilde F_{n,x}, F_x)] =\bbE\Big[   \int_{\bbR} \Big|\sum_{i=1}^n W_{ni}(x)\big(\mathds{1}_{\{\tilde Y_i\leq z\}}-F_x(z) \big)\Big|\mathrm{d}z\Big].
	\]
	Applying Fubini's theorem and using the upper bound
    \begin{align*}
       & \bbE\Big[ \Big|\sum_{i=1}^n W_{ni}(x)\big(\mathds{1}_{\{\tilde Y_i\leq z\}}-F_x(z) \big)\Big|\Big]\\
       \leq&\;  \bbE\Big[ \Big|\sum_{i=1}^n W_{ni}(x)\big(\mathds{1}_{\{\tilde Y_i\leq z\}}-F_x(z) \big)\Big|^2\Big]^{1/2}\\
       =&\; \bbE\Big[\sum_{i=1}^n W_{ni}^2(x)\Big]^{1/2} \sqrt{F_x(z)(1-F_x(z))},
    \end{align*}
    we deduce
    \[
    \bbE[\mathcal{W}_1(\tilde F_{n,x}, F_x)]\leq \bbE\Big[\sum_{i=1}^n W_{ni}^2(x)\Big]^{1/2} \int_{\bbR}\sqrt{F_x(z)(1-F_x(z))}\mathrm{d}z.
	\]
    Collecting the two terms yields Proposition~\ref{prop:upperbound}.
    \end{proof}
    
    \begin{proof}[Proof of Corollary~\ref{cor:kernel}]
    For the kernel algorithm with uniform kernel and weights~\eqref{eq:kernel-weights}, we denote by 
    \[
    N_n(X)=\sum_{i=1}^n \mathds{1}_{\{X_i\in B(X,h_n)\}}
    \]
    the number of points in the ball $B(X,h_n)$ with center $X$ and radius $h_n$.   If $N_n\geq 1$, only the points in $B(X,h_n)$ have a nonzero weight which is equal to $1/N_n$. If $N_n=0$, then by convention all the weights are equal to $1/n$. Thus we deduce
    \[
    \bbE\Big[\sum_{i=1}^nW_{ni}^2(X)\Big]=\bbE\Big[\frac{1}{N_n(X)}\mathds{1}_{\{N_n(X)\geq 1\}}\Big]+\frac{1}{n}\mathbb{P}(N_n(X)=0)
    \]
    and
    \[
    \bbE\Big[\sum_{i=1}^{n}W_{ni}(X)\|X_i-X\|^H\Big]\leq h_n^H \mathbb{P}(N_n(X)\geq 1)+k^{H/2}\mathbb{P}(N_n(X)=0)
    \]
    because the distance to $X$ for the points with non zero weight can be bounded from above by $h_n$ if $N_n(X)\geq 1$ and by $\sqrt{k}$ otherwise (note that $\sqrt{k}$ is the diameter of $[0,1]^k$).
    
    Next, we use the fact that, conditionally on $X=x$, $N_n(x)$ has a binomial distribution with parameters $n$ and $p_n(x)=\mathbb{P}(X_1\in B(x,h_n))$. This implies
    \[
    \bbE\Big[\frac{1}{N_n(X)}\mathds{1}_{\{N_n(X)\geq 1\}}\Big]\leq \bbE\Big[\frac{2}{np_n(X)}\Big]\leq \frac{2 c_k}{nh_n^k}
    \]
    where the first inequality follows from  \citet[Lemma~4.1]{gyorfi} and the second one  from  \citet[Equation~5.1]{gyorfi} where the constant $c_k=k^{k/2}$ can be taken.   Similarly, 
    \begin{align*}
    \mathbb{P}(N_n(X)=0)&=\mathbb{E}[(1-p_n(X))^n]\leq \mathbb{E}[e^{-np_n(X)}] \\
    &\leq \big(\max_{u>0} ue^{-u}\big)\times \bbE\Big[\frac{1}{np_n(X)}\Big]\\
    &\leq  \frac{c_k}{nh_n^k}.
    \end{align*}
    In view of these different estimates, Equation~\eqref{eq:general-bound} entails
	\begin{align*}
	 \bbE\big[\mathcal{W}_1(\hat F_{n,X}, F_X)\big]
	 & \leq L \Big(h_n^H+k^{H/2}\frac{c_k}{nh_n^k}\Big)  +M \left(\frac{(2+1/n)c_k}{nh_n^k}\right)^{1/2}\\
	 &\leq Lh_n^H + M\sqrt{(2+1/n)c_k}(nh_n^k)^{-1/2}+Lk^{H/2}c_k(nh_n^k)^{-1}. \end{align*}
    \end{proof}
    \begin{proof}[Proof of Corollary~\ref{cor:knn}]
    For the nearest neighbor weights~\eqref{eq:nearest-neighbor-weights}, there are exactly $\kappa_n$ non-vanishing weights with value $1/\kappa_n$ whence
    \[
    \sum_{i=1}^nW_{ni}^2(X)=\frac{1}{\kappa_n}.
    \]
    Furthermore, the $\kappa_n$ nearest neighbors of $X$ satisfy 
    \[
    \|X_{i:n}(X)-X\|\leq \|X_{\kappa_n:n}(X)-X\|,\quad i=1,\ldots,\kappa_n.
    \]
    In view of this, Equation~\eqref{eq:general-bound} entails
	\begin{align*}
	 \bbE\big[\mathcal{W}_1(\hat F_{n,X}, F_X)\big]
	 & \leq L  \bbE\big[\|X_{\kappa_n:n}(X)-X\|^H\big] +M \kappa_n^{-1/2}\\
	 & \leq L  \bbE\big[\|X_{\kappa_n:n}(X)-X\|^2\big]^{H/2} +M \kappa_n^{-1/2}
	\end{align*}
	where the last line relies on Jensen's inequality. We conclude thanks to  \citet[Theorem~2.4]{biau_2015} stating that
	\[
	\bbE\left[\| X_{\kappa_n:n}(X)-X \|^2\right] 
	\leq \begin{cases}
		 	8 (\kappa_n/n) & \mbox{if } k=1,\\
		 	\tilde{c}_k (\kappa_n/n)^{2/k} & \mbox{if } k\geq 2.
		 \end{cases}
    \]	
    \end{proof}

    \begin{proof}[Proof of Theorem~\ref{thm:minimax} (lower bound)] The proof of a lower bound for the minimax risk in Wasserstein distance is adapted from the proof of Proposition~3 in \citet[Appendix~C]{PDNT_22}  and we give only the main lines.
    
    Consider the subclass of $\mathcal{D}(H,L,M)$ where $Y$ is a binary variable with possible values $0$ and $B$. Note that condition c) of Definition~\ref{def:class-D} is automatically satisfied if $B\leq 4M$. The conditional distribution of $Y$ given $X=x$ is characterized by 
	\[
	p(x)=\mathbb{P}(Y=B\mid X=x)
	\]
	and the Wasserstein distance by
	\[
	\mathcal{W}_1(F_x,F_{x'})=B|p(x)-p(x')|,
	\]
	so that property b) of Definition~\ref{def:class-D} is equivalent to
     \begin{equation}\label{eq:regularity}
    B |p(x)-p(x')|\leq L \Vert x-x' \Vert^H.
     \end{equation}
	Similarly as in \citet[Lemma~1]{PDNT_22}, one can show that a general prediction  with values in $\mathbb{R}$ can always be improved (in terms of Wasserstein error) into a binary prediction with values in $\{0,B\}$. Indeed, for a given prediction $\hat F_{n,x}$, the binary prediction  
	\[
	\tilde F_{n,x} =(1-\tilde p_n(x))\delta_0 + \tilde p_n(x) \delta_B
	\]
	with 
	\[
	\tilde p_n(x)=\frac{1}{B} \int_0^B \big(1-\hat F_{n,x}(z)\big)\mathrm{d}z
	\]
    always satisfies
	\[
	\bbE[\mathcal{W}_1(\tilde F_{n,X},F_{X})]\leq \bbE[\mathcal{W}_1(\hat F_{n,X},F_{X})].
	\]
    This simple remark implies that, when considering the minimax risk on the restriction of the class $\mathcal{D}(H,L,M)$ to binary distributions, we can focus on binary  predictions. But for binary predictions, 
    \[
    \bbE[\mathcal{W}_1(\tilde F_{n,X},F_{X})]=B|\tilde p_{n}(X)-p(X)|,
    \]
    showing that the minimax rate of convergence for distributional regression in Wasserstein distance is equal to the minimax rate of convergence for estimating the regression function $\mathbb{E}[Y|X=x]=B p(x)$ in absolute error under the regularity assumption~\eqref{eq:regularity} . According to \cite{Stone1980, Stone1982}, a lower bound for the minimax risk in $L^1$-norm is  $n^{-H/(2H+k)}$ (in the first paper, we consider the Bernoulli regression model referred to as Model 1 Example 5 and the $L^q$ distance with $q=1$).
    \end{proof}

    \begin{proof}[Proof of Theorem~\ref{thm:minimax} (upper bound)]
    For the kernel method, Corollary~\ref{cor:kernel} states that the expected Wasserstein error is upper bounded by
    \[
     Lh_n^H +M\sqrt{(2+1/n)c_k}(nh_n^k)^{-1/2}+Lk^{H/2}c_k(nh_n^k)^{-1}.
    \]
    Minimizing the sum of the first two terms in the right-hand side with respect to $h_n$ leads to $h_n\propto n^{1/(2H+1)}$ and implies that right-hand side is of order $n^{-H/(2H+k)}$ (the last term is negligible). This matches the minimax lower rate of convergence previously  stated previously and proves that the  optimal minimax risk is of order $n^{-H/(2H+k)}$. 
    
    For the nearest neighbor method, minimizing the upper bound for the expected Wasserstein error  from Corollary~\ref{cor:knn} leads to
    $$\kappa_n\propto \begin{cases}
        n^{H/(H+1)}& \mbox{if } k=1\\
        n^{H/(H+k/2)}&\mbox{if } k\geq2
    \end{cases},$$ 
    with a corresponding risk of order  
    $$ \begin{cases}
        n^{-H/(2H+2)}& \mbox{if } k=1\\
        n^{-H/(2H+k)}&\mbox{if } k\geq2
    \end{cases},
    $$ 
    whence the nearest neighbor method reaches the optimal rate when $k\geq2$.
    \end{proof}

\subsection{Proof of Proposition~\ref{prop:appli}}
\begin{proof}[Proof of Proposition~\ref{prop:appli}] 
The first point follows from the fact that composition by a continuous application respects convergence in probability. Indeed, as the estimator $\hat F_{n,X}$ converges to $F_X$ in probability for the Wasserstein distance $\mathcal{W}_p$, $S(\hat F_{n,X})$ converges to $S(F_X)$ in probability.


In order to prove the consistency in $\mathrm{L}^{p/q}$, it is enough to prove furthermore the uniform integrability of $|S(\hat F_{n,X})-S(F_{X})|^{p/q}$, $n\geq 1$. With the convexity inequality of power functions as $p/q\geq 1$, Equation \eqref{eq:linear-bound} entails 
\begin{align*}
|S(\hat F_{n,X})-S(F_{X})|^{p/q}&\leq 2^{p/q-1}\big(|S(\hat F_{n,X})|^{p/q}+|S(F_{X})|^{p/q}\big)\\
&\leq 2^{p/q-1}\Big((aM_p^q(\hat F_{n,X})+b)^{p/q}+(aM_p^q(F_{X})+b)^{p/q}\Big)\\
&\leq 2^{2(p/q-1)}\Big(a^{p/q} M_p^p(\hat F_{n,X})+a^{p/q} M_p^p(F_{X})+2b^{p/q}\Big).
\end{align*}
This upper bound together with Equation~\eqref{eq:ui} implies the uniform integrability of $|S(\hat F_{n,X})-S(F_{X})|^{p/q}$, $n\geq 1$, which concludes the proof.
\end{proof}

\section*{Acknowledgements}
The authors acknowledge the support of the French Agence Nationale de la Recherche (ANR) under reference ANR-20-CE40-0025-01 (T-REX project). They are also grateful to Mehdi Dagdoug for suggesting the example of random forest weights (Example~\ref{example2}).

\bibliography{references.bib}

\begin{thebibliography}{23}
\providecommand{\natexlab}[1]{#1}
\providecommand{\url}[1]{\texttt{#1}}
\expandafter\ifx\csname urlstyle\endcsname\relax
  \providecommand{\doi}[1]{doi: #1}\else
  \providecommand{\doi}{doi: \begingroup \urlstyle{rm}\Url}\fi

\bibitem[Bayraktar and Guo(2021)]{BG21}
Erhan Bayraktar and Gaoyue Guo.
\newblock {Strong equivalence between metrics of Wasserstein type}.
\newblock \emph{Electronic Communications in Probability}, 26\penalty0
  (none):\penalty0 1 -- 13, 2021.
\newblock \doi{10.1214/21-ECP383}.
\newblock URL \url{https://doi.org/10.1214/21-ECP383}.

\bibitem[Biau and Devroye(2015)]{biau_2015}
Gérard Biau and Luc Devroye.
\newblock \emph{Lectures on the Nearest Neighbor Method}.
\newblock Springer Series in the Data Sciences. Springer, 2015.

\bibitem[Bobkov and Ledoux(2019)]{BL19}
Sergey Bobkov and Michel Ledoux.
\newblock One-dimensional empirical measures, order statistics, and
  {K}antorovich transport distances.
\newblock \emph{Mem. Amer. Math. Soc.}, 261\penalty0 (1259):\penalty0 v+126,
  2019.
\newblock ISSN 0065-9266.
\newblock \doi{10.1090/memo/1259}.
\newblock URL \url{https://doi.org/10.1090/memo/1259}.

\bibitem[Breiman(2001)]{Breiman_2001}
Leo Breiman.
\newblock Random forests.
\newblock \emph{Machine Learning}, 45, 2001.
\newblock \doi{10.1023/a:1010933404324}.

\bibitem[Gneiting and Katzfuss(2014)]{gneitingkatz}
Tilmann Gneiting and Matthiass Katzfuss.
\newblock Probabilistic forecasting.
\newblock \emph{Annual Review of Statistics and its Applications}, 2014.
\newblock \doi{10.1146/annurev-statistics-062713-085831}.

\bibitem[Gneiting et~al.(2005)Gneiting, Raftery, Westveld, and
  Goldman]{gneiting_raftery_2005}
Tilmann Gneiting, Adrian~E. Raftery, Anton~H. Westveld, and Tom Goldman.
\newblock {C}alibrated {P}robabilistic {F}orecasting {U}sing {E}nsemble {M}odel
  {O}utput {S}tatistics and {M}inimum {C}{R}{P}{S} {E}stimation.
\newblock \emph{Monthly Weather Review}, 133\penalty0 (5):\penalty0 1098 --
  1118, 2005.
\newblock \doi{10.1175/MWR2904.1}.

\bibitem[Greenwood et~al.(1979)Greenwood, Landwehr, Matalas, and
  Wallis]{prob_weig_mome}
J.~Arthur Greenwood, J.~Maciunas Landwehr, N.~C. Matalas, and J.~R. Wallis.
\newblock Probability weighted moments: Definition and relation to parameters
  of several distributions expressable in inverse form.
\newblock \emph{Water Resources Research}, 15\penalty0 (5):\penalty0
  1049--1054, 1979.
\newblock \doi{https://doi.org/10.1029/WR015i005p01049}.
\newblock URL
  \url{https://agupubs.onlinelibrary.wiley.com/doi/abs/10.1029/WR015i005p01049}.

\bibitem[Gy\"orfi et~al.(2002)Gy\"orfi, Kohler, Krzyzak, and Walk]{gyorfi}
L\'aszlò Gy\"orfi, Michael Kohler, Adam Krzyzak, and Harro Walk.
\newblock \emph{A Distribution-Free Theory of Nonparametric Regression}.
\newblock Springer Series in Statistics. Springer, 2002.

\bibitem[Hamill and Colucci(1997)]{hamill_1997}
Thomas~M. Hamill and Stephen~J. Colucci.
\newblock Verification of eta–rsm short-range ensemble forecasts.
\newblock \emph{Monthly Weather Review}, 125, jun 1997.
\newblock \doi{10.1175/1520-0493(1997)125<1312:VOERSR>2.0.CO;2}.

\bibitem[Henzi et~al.(2021)Henzi, Ziegel, and Gneiting]{Henzi_et_al_2021}
Alexander Henzi, Johanna~F. Ziegel, and Tilmann Gneiting.
\newblock Isotonic distributional regression.
\newblock \emph{J. R. Stat. Soc. Ser. B. Stat. Methodol.}, 83\penalty0
  (5):\penalty0 963--993, 2021.
\newblock ISSN 1369-7412.

\bibitem[Li et~al.(2021)Li, Reich, and Bondell]{li_2021}
Rui Li, Brian~J. Reich, and Howard~D. Bondell.
\newblock Deep distribution regression.
\newblock \emph{Computational Statistics \& Data Analysis}, 159:\penalty0
  107203, 2021.
\newblock ISSN 0167-9473.
\newblock \doi{https://doi.org/10.1016/j.csda.2021.107203}.

\bibitem[Nadaraya(1964)]{nadaraya_1964}
E.~A. Nadaraya.
\newblock On estimating regression.
\newblock \emph{Theory of Probability \& Its Applications}, 9\penalty0
  (1):\penalty0 141--142, 1964.
\newblock \doi{10.1137/1109020}.
\newblock URL \url{https://doi.org/10.1137/1109020}.

\bibitem[Panaretos and Zemel(2020)]{Panaretos_Zemel_2020}
Victor~M. Panaretos and Yoav Zemel.
\newblock \emph{An invitation to statistics in {W}asserstein space}.
\newblock SpringerBriefs in Probability and Mathematical Statistics. Springer,
  Cham, 2020.
\newblock ISBN 978-3-030-38437-1; 978-3-030-38438-8.
\newblock \doi{10.1007/978-3-030-38438-8}.
\newblock URL \url{https://doi.org/10.1007/978-3-030-38438-8}.

\bibitem[Pic et~al.(2022)Pic, Dombry, Naveau, and Taillardat]{PDNT_22}
Romain Pic, Clément Dombry, Philippe Naveau, and Maxime Taillardat.
\newblock Distributional regression and its evaluation with the crps: Bounds
  and convergence of the minimax risk.
\newblock \emph{International Journal of Forecasting}, 2022.
\newblock ISSN 0169-2070.
\newblock \doi{https://doi.org/10.1016/j.ijforecast.2022.11.001}.
\newblock URL
  \url{https://www.sciencedirect.com/science/article/pii/S0169207022001443}.

\bibitem[Schulz and Lerch(2021)]{schulz_2021}
Benedikt Schulz and Sebastian Lerch.
\newblock Machine learning methods for postprocessing ensemble forecasts of
  wind gusts: A systematic comparison, 2021.
\newblock \href{https://arxiv.org/abs/2106.09512}{arXiv:2106.09512}.

\bibitem[Scornet(2016)]{scornet_2016}
Erwan Scornet.
\newblock On the asymptotics of random forests.
\newblock \emph{J. Multivariate Anal.}, 146:\penalty0 72--83, 2016.
\newblock ISSN 0047-259X.
\newblock \doi{10.1016/j.jmva.2015.06.009}.
\newblock URL \url{https://doi.org/10.1016/j.jmva.2015.06.009}.

\bibitem[Stone(1977)]{Stone_1977}
Charles~J. Stone.
\newblock Consistent nonparametric regression.
\newblock \emph{Ann. Statist.}, 5\penalty0 (4):\penalty0 595--645, 1977.
\newblock ISSN 0090-5364.
\newblock URL
  \url{http://links.jstor.org/sici?sici=0090-5364(197707)5:4<595:CNR>2.0.CO;2-O&origin=MSN}.
\newblock With discussion and a reply by the author.

\bibitem[Stone(1980)]{Stone1980}
Charles~J. Stone.
\newblock Optimal rates of convergence for nonparametric estimators.
\newblock \emph{Ann. Statist.}, 8\penalty0 (6):\penalty0 1348--1360, 1980.
\newblock ISSN 0090-5364.
\newblock URL
  \url{http://links.jstor.org/sici?sici=0090-5364(198011)8:6<1348:OROCFN>2.0.CO;2-Q&origin=MSN}.

\bibitem[Stone(1982)]{Stone1982}
Charles~J. Stone.
\newblock Optimal global rates of convergence for nonparametric regression.
\newblock \emph{Ann. Statist.}, 10\penalty0 (4):\penalty0 1040--1053, 1982.
\newblock ISSN 0090-5364.
\newblock URL
  \url{http://links.jstor.org/sici?sici=0090-5364(198212)10:4<1040:OGROCF>2.0.CO;2-2&origin=MSN}.

\bibitem[Taillardat et~al.(2019)Taillardat, Fougères, Naveau, and
  Mestre]{taillardat_2019}
Maxime Taillardat, Anne-Laure Fougères, Philippe Naveau, and Olivier Mestre.
\newblock Forest-based and semiparametric methods for the postprocessing of
  rainfall ensemble forecasting.
\newblock \emph{Weather and Forecasting}, 34, jun 2019.
\newblock \doi{10.1175/WAF-D-18-0149.1}.

\bibitem[Villani(2009)]{Villani_2009}
C\'{e}dric Villani.
\newblock \emph{Optimal transport}, volume 338 of \emph{Grundlehren der
  mathematischen Wissenschaften [Fundamental Principles of Mathematical
  Sciences]}.
\newblock Springer-Verlag, Berlin, 2009.
\newblock ISBN 978-3-540-71049-3.
\newblock \doi{10.1007/978-3-540-71050-9}.
\newblock URL \url{https://doi.org/10.1007/978-3-540-71050-9}.
\newblock Old and new.

\bibitem[Watson(1964)]{watson_1964}
Geoffrey~S. Watson.
\newblock Smooth regression analysis.
\newblock \emph{Sankhyā: The Indian Journal of Statistics, Series A
  (1961-2002)}, 26\penalty0 (4):\penalty0 359--372, 1964.
\newblock ISSN 0581572X.
\newblock URL \url{http://www.jstor.org/stable/25049340}.

\bibitem[{Ćevid} et~al.(2022){Ćevid}, {Michel}, {Näf}, {Meinshausen}, and
  {Bühlmann}]{Cevid_et_al_2021}
Domagoj {Ćevid}, Loris {Michel}, Jeffrey {Näf}, Nicolai {Meinshausen}, and
  Peter {Bühlmann}.
\newblock Distributional random forests: Heterogeneity adjustment and
  multivariate distributional regression.
\newblock \emph{Journal of Machine Learning Research}, 23\penalty0
  (333):\penalty0 1--79, 2022.
\newblock URL \url{http://jmlr.org/papers/v23/21-0585.html}.

\end{thebibliography}
\end{document}